\newtheorem{theorem}{Theorem}
\newtheorem{corollary}{Corollary}
\newtheorem{proposition}{Proposition}
\newtheorem{lemma}{Lemma}
\newtheorem{remark}{Remark}
\newtheorem{question}{Question}
\title[On the endomorphism rings]{On the endomorphism rings of abelian groups and their Jacobson radical}
\begin{document}
\author{V.~Bovdi, A.~Grishkov,  M.~Ursul}
\dedicatory{Dedicated to the memory of Professor J\'anos ~Szendrei}

\thanks{The research was supported by FAPESP (Brazil) Process N: 2014/18318-7,
CNPq (Brazil) and RFFI (Russia) 13-01-00239a.}

\address{
\texttt{VICTOR BOVDI}
\newline
Department of Math. Sciences, UAE University, Al-Ain,
\newline United Arab Emirates} \email{vbovdi@gmail.com}

\address{
\texttt{ALEXANDER GRISHKOV}
\newline
IME, USP, Rua do Matao, 1010 -- Citade Universit\`{a}ria,
\newline
CEP 05508-090, Sao Paulo, Brazil} \email{shuragri@gmail.com}

\address{
\texttt{MIHAIL URSUL},
\newline
Department of Mathematics and Computer Science,
\newline
University  of Technology, Lae, Papua New Guinea}
\email{mihail.ursul@gmail.com}
\subjclass{Primary: 16W80, 16A65, 16S50,  16N40}
\keywords{topological ring, Jacobson radical, quasi-injective module, endomorphism ring, admissible topology, Bohr topology,
Liebert topology, finite topology, functorial topology, shift homomorphism}

\maketitle

\begin{abstract}
We give a  characterization of those abelian groups which are direct sums of cyclic groups and the Jacobson radical of
their endomorphism rings are closed. A complete characterization of $p$-groups $A$ for which $(EndA,\mathcal T_L)$ is locally compact,
where $\mathcal T_L$  is the Liebert topology on $EndA$, is given. We prove that if $A$ is a countable elementary $p$-group
then $EndA$ has a non-admissible ring topology. To every functorial topology on $A$ a right bounded ring topology on
$EndA$ is attached. By using  this topology we construct on $EndA$  a
non-metrizable and non-admissibe ring topology on $EndA$ for  elementary countable $p$-groups $A$.
\end{abstract}

\section{Introduction}

The question of closedness of the Jacobson radical of a topological ring have attracted the attention of researchers since  the beginning
of the theory of topological rings (see, e.g. \cite{Kaplansky_1}, \cite {Yood_1} and  \cite {Abel}).  Kaplansky  proved
\cite{Kaplansky_1} that the Jacobson radical of a compact ring is closed. Later he extended this result to locally compact
rings \cite{Kaplansky_2}.


 The ring of endomorphisms of an abelian group   considered as a topological ring with the finite topology
 is an interesting construction that connects the theory of rings, the theory of abelian groups and the theory of topological
rings.

 It is
tempting to use the Jacobson radical for  the study of rings of endomorphisms. But there is a  well-known example  in Bourbaki for
a free module $M_R$ where  $\frak{J}(End(M_R))$ is not closed (\cite{Bourbaki}, Chapter III, \S\, 6, p. 110).
The proof essentially uses
 the properties of $R$.
 The first examples of abelian groups $A$ with non-closed $\frak{J}(EndA)$ were constructed in
\cite{Ursul_Juras} and \cite{Budanov}.

We construct as in \cite{Ursul_Juras} examples such that   $\frak{J}(EndA)$ is not closed for certain groups $A$ that  are  direct sums of cyclic groups.

The properties of these examples raised the problem  of the characterization of abelian groups which are direct sums of cyclic groups and the
Jacobson radical of their endomorphism rings are closed. It was known that the Jacobson radical of a ring $EndA$ for a divisible group $A$ is closed, we propose a generalization of this result. Namely, we prove in Section 2 that the Jacobson radical $\frak{J}(End(M_R))$
for a quasi-injective module is closed.

One important class of abelian groups is the class of  direct sums of cyclic groups. We give in Section 3 a criterion  which
ensures that the
Jacobson radical of $EndA$ is closed when $A$ is a direct sum of cyclic groups.

One interesting topology on $EndA$  for a $p$-group $A$ is the Liebert topology $\mathcal T_L$. The notion of admissible
topologies for function spaces was introduced by \cite{Arens_Dugundji} and was extended in \cite{Abrudan} to topologies
on rings $EndA$. The importance of admissible topologies is based on the fact that if $A$ is an abelian group endowed with
the discrete topology and $\mathcal T$ a ring topology, then $A$ is a topological left $(EndA,\mathcal T)$-module if and
only if $\mathcal T$ is admissible.

The Jacobson radical of $(EndA,\mathcal T_L)$ is always closed. We prove that the Liebert topology is admissibile
(Proposition 4). We give a complete characterization of $p$-groups $A$ for which $(EndA,\mathcal T_L)$ is locally
compact (Theorem 4).
On the other hand, for every $p$-elementary countable group $A$ we construct  a non-admissible
metrizable topology (Theorem 5).

We study in Section 5 classes of ring topologies on $EndA$  associated with functorial topologies.   For
every $p$-elementary countable group $A$  we costruct  a countable elementary $p$-group such that  on $EndA$ there exists a non-discrete
admissible topology (Theorem 7). Up to the authors knowledge, no other examples for  a non-metrizable ring topology on $EndA$
for a countable $p$-elementary group have been found yet. We don't know if there exists an infinite ring on which every ring topology is
metrizable.

We give examples for the cases when a ring topology associated with a functorial topology  is discrete or non-discrete (Theorem 8).

\section{Notation}
The symbol $\omega$ stands for the set of all natural numbers including zero.  The set of all prime natural numbers
is denoted by $\mathbb{P}$. All rings are assumed to be associative with identity. We denote the Jacobson radical of
a ring $R$ by $\mathfrak{J}(R)$. All modules are unitary right modules. By $\overline{A}$ we denote the closure of
the subset $A$ of a topological space.

An abelian group $A$ is called bounded if there exists a non-zero natural number $n$ such that $na=0$ for all $a\in A$.
We denote the cyclic group of finite order $n$ with additive operation by $\mathbb{Z}(n)$.

We freely use facts about summable families in topological abelian groups (see \cite{Bourbaki}, Chapter III, \S 5):
If $A$ is an abelian group and $K$ is a finite subset of $A$, then put
\[
T(K)=\{\alpha\in EndA\mid  \alpha K=0\}.
\]
The finite topology on $EndA$ is given by the family $\{T(K)\}$,  where $K$ runs over all finite subsets of $A$ as
a fundamental system of neighborhoods of zero.

A family $\{x_i\}_{i\in I}$ of elements of $EndA$ is called summable (see \cite{Bourbaki},  p. 80) if there exists
$x\in EndA$ such that for any finite subset $K$ of $A$ there exists a finite subset $I_0\subset I$ such that
$\Sigma_{i\in I_1}x_i\in x+T(K)$ for any finite subset $I_1$ containing $I_0$.

It is well known (see \cite{Bourbaki}, Theorem 1, p. 82) that a family $\{x_i\}_{i\in I}$ of $EndA$ is summable in
$(EndA,\mathcal{T}_{fin})$  if and only if for each $a\in A$ there exists a finite subset $I_0$ of $I$ such that
$x_i(a)=0$ for all $i\notin I_0$.

All of the  necessary notions from the theory of abelian groups, ring   theory and the theory of topological groups can be
found in \cite{Fuchs}, \cite{Lambek} and  \cite{Bourbaki, Ursul_book}, respectively.

\section{Endomorphism rings of quasi-injective modules}

The question that when the Jacobson radical of a topological ring is closed has drawn the attention of researchers since
the beginning of the theory of topological rings (see for example \cite{Kaplansky_1}, \cite {Kaplansky_2}, \cite{Yood_1}
and  \cite{Yood_2}). In \cite{Kaplansky_1}  Kaplansky has proved  that the Jacobson radical of a compact ring is closed.
In  \cite{Kaplansky_2} he extended this result to locally compact rings.

It is interesting to connect  the theory of abelian groups and the theory of topological rings.
 The ring
of endomorphisms of an abelian group considered as a topological ring with the finite topology that will be denoted by $T_{fin}$.
It is tempting to use
the Jacobson radical to study the rings of endomorphisms. Bourbaki's example of a free module $M_R$ over a ring $R$
such that the Jacobson radical $\mathfrak{J}(End(M_R))$  of the ring of endomorphisms $End(M_R)$ is not closed
(see \cite{Bourbaki}, Chapter III, \S 6, p.110) is well known. The proof   uses essentially the  properties of the ring $R$.

A right module $M_R$ over a ring $R$ is called a quasi-injective module (see \cite{Faith_Utumi} or
\cite{Lambek}, Chapter 4, p.104),
if every partial endomorphism of $M_R$ can be extended to a full endomorphism.

\begin{theorem}\label{T:1}
The Jacobson radical $\frak{J}(End(M_R))$ of the ring of endomorphisms of a quasi-injective
module $M_R$ is closed with respect to the finite topology.
\end{theorem}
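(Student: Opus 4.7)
The plan is to rely on the classical description, due to Faith and Utumi (see \cite{Faith_Utumi} and \cite{Lambek}, Chapter 4), of the Jacobson radical of the endomorphism ring of a quasi-injective module: writing $E = End(M_R)$, one has
\[
\mathfrak{J}(E) = \{\, f \in E : \ker f \text{ is an essential submodule of } M_R \,\}.
\]
I would take this characterization as the starting point and verify that the right-hand set is closed in the finite topology $\mathcal T_{fin}$.

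The verification goes by contradiction. Suppose $f$ lies in the $\mathcal T_{fin}$-closure of $\mathfrak{J}(E)$ but $\ker f$ is not essential in $M$. Then there is a nonzero submodule $N \le M$ with $N \cap \ker f = 0$, so that $f|_N$ is injective. Fix any $0 \ne y \in N$. Since $f$ lies in the closure, the basic neighborhood $f + T(\{y\})$ meets $\mathfrak{J}(E)$, producing $g \in \mathfrak{J}(E)$ with $(f-g)(y) = 0$, i.e.\ $f(y) = g(y)$. Because endomorphisms are $R$-linear, this single-point agreement propagates: $f(yr) = g(yr)$ for every $r \in R$.

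Now $\ker g$ is essential in $M$, so $yR \cap \ker g \ne 0$: choose $r \in R$ with $yr \ne 0$ and $g(yr) = 0$. Then $f(yr) = g(yr) = 0$, while $yr \in N \setminus \{0\}$, contradicting the injectivity of $f|_N$. Hence $\ker f$ is essential, $f \in \mathfrak{J}(E)$, and $\mathfrak{J}(E)$ is closed.

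The only real obstacle is the appeal to the Faith--Utumi description of $\mathfrak{J}(E)$; once that is granted, the topological argument is almost automatic. The decisive ingredient is the $R$-linearity of endomorphisms, which upgrades the pointwise agreement of $f$ and $g$ on the single element $y$ to agreement on the entire cyclic submodule $yR$, where the essentiality of $\ker g$ can then be exploited.
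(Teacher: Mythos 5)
Your proof is correct and takes essentially the same route as the paper's: both start from the Faith--Utumi/Lambek description of $\mathfrak{J}(End(M_R))$ as the set of endomorphisms with essential (large) kernel, pick a cyclic submodule $yR$ meeting $\ker f$ trivially, and use the basic neighborhood $f+T(\{y\})$ to produce a radical element agreeing with $f$ on all of $yR$, contradicting the essentiality of its kernel.
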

\begin{proof}[Proof of Theorem \ref{T:1}]
By \cite{Lambek}, \S 4.4, $\frak{J}(End(M_R))$ consists of  endomorphisms with large kernels.

Assume that there exists
$\alpha\in cl(\frak{J}(End(M_R)))\setminus \frak{J}(End(M_R))$. Since $\mathfrak{Ker}\alpha$ is not large there exists $0\neq m\in M$
such that
\[
\mathfrak{Ker}\alpha\cap mR=0.
\]
Let $\gamma\in T(m)$ be  such that $\alpha+\gamma=\beta\in \frak{J}(End(M_R))$.
Then $\mathfrak{Ker}\beta\cap mR\neq 0$. Let $s\in R$ be such that $0\neq ms\in \mathfrak{Ker}\beta$. Then $\alpha(ms)=0$,
hence $0\neq ms\in\mathfrak{Ker}\alpha\cap mR$, a contradiction.
\end{proof}

The next theorem provides an example in the reverse  direction.

\begin{theorem}\label{T:2}
Let $M_R$ be a  right quasi-injective module over a ring $R$.
The Jacobson radical $\mathfrak{J}(End(M_R))$ of the ring  $End(M_R)$  is closed with respect to the finite topology.
\end{theorem}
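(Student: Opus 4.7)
The plan is to reduce the statement, via the classical description of the Jacobson radical of the endomorphism ring of a quasi-injective module, to a concrete condition on kernels, and then to verify closedness by showing that the complement of $\mathfrak{J}(End(M_R))$ is open in the finite topology. The key input, from \cite{Lambek}, \S 4.4, is that for quasi-injective $M_R$, an element $\varphi \in End(M_R)$ lies in $\mathfrak{J}(End(M_R))$ if and only if $\mathfrak{Ker}\,\varphi$ is large (essential) in $M$. This is the only place where the quasi-injective hypothesis enters; everything afterwards is a routine manipulation with the basic neighborhoods $T(K)$.

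To establish openness of the complement, I would pick $\alpha \in End(M_R)\setminus \mathfrak{J}(End(M_R))$, so that $\mathfrak{Ker}\,\alpha$ is not large, and produce $0 \neq m \in M$ with $\mathfrak{Ker}\,\alpha \cap mR = 0$. I would then claim that the basic neighborhood $\alpha + T(\{m\})$ is entirely contained in the complement of $\mathfrak{J}(End(M_R))$. For the verification: if $\gamma \in T(\{m\})$, then $\gamma(m) = 0$, and since $\gamma$ is $R$-linear, $\gamma(ms) = \gamma(m)s = 0$ for every $s \in R$; therefore $(\alpha+\gamma)(ms) = \alpha(ms)$, which is nonzero whenever $ms \neq 0$, by the choice of $m$. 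In particular $\mathfrak{Ker}(\alpha+\gamma) \cap mR = 0$, so $\mathfrak{Ker}(\alpha+\gamma)$ is not large, and $\alpha+\gamma \notin \mathfrak{J}(End(M_R))$, as required. This is essentially a rephrasing of the contradiction argument of Theorem \ref{T:1} in the form of a direct openness check.

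The only genuine obstacle is the appeal to Lambek's theorem identifying the Jacobson radical with the set of endomorphisms having large kernel; this is a classical but nontrivial fact, and it rests on quasi-injectivity through the extendability of partial endomorphisms to full ones. Once that structural description is accepted, the topological argument is merely a translation of ``largeness of $\mathfrak{Ker}\,\varphi$'' through the single finite test set $\{m\}$, and no further property of $R$ or $M_R$ is required.
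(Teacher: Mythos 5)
Your proof is correct and follows essentially the same route as the paper's: both rest on Lambek's characterization of $\mathfrak{J}(End(M_R))$ as the endomorphisms with large kernel, and both exploit the single test point $m$ with $\mathfrak{Ker}\,\alpha\cap mR=0$ together with the computation $\gamma(ms)=\gamma(m)s=0$ for $\gamma\in T(\{m\})$. Recasting the paper's contradiction argument as a direct check that the complement of the radical is open is only a cosmetic difference (and your version has the minor virtue of making the $R$-linearity step explicit).
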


\begin{proof}[Proof of Theorem \ref{T:2}] Put $\mathfrak{J}_M=\mathfrak{J}(End(M_R))$.
It is well known (see \cite{Lambek}, \S 4.4) that  $\mathfrak{J}_M$ consists of  endomorphisms
with large kernels. Assume that there exists $\alpha\in \overline{\mathfrak{J}_M}\setminus \mathfrak{J}_M$.
Since $\mathfrak{Ker}(\alpha)$ is not large,  there exists $0\neq m\in M$ such that
$\mathfrak{Ker}(\alpha)\cap mR=0$. Let $\gamma\in T(m)$ be such that $\alpha+\gamma=\beta\in \mathfrak{J}_M$.
Then $\mathfrak{Ker}(\beta)\cap mR\neq 0$. If $s\in R$ satisfies  $0\neq ms\in \mathfrak{Ker}(\beta)$,
then $\alpha(ms)=0$, hence $0\neq ms\in\mathfrak{Ker}(\alpha)\cap mR$, a contradiction.
\end{proof}

Note that this  theorem  can be obtained  from  Lemma 11 of \cite{Mohamed_Muller}, but  our proof is based on different arguments.

\section{The finite and the Liebert topologies on endomorphism rings of abelian groups}

Examples of abelian groups $A$ such that $\mathfrak{J}(EndA)$ is not closed were constructed independently
in \cite{Budanov} and \cite{Ursul_Juras}.
Moreover, in  \cite{Ursul_Juras} an example of an abelian  group  $A$ was presented as a direct sum of cyclic groups
such that  the Jacobson radical  $\mathfrak{J}(EndA)$ is not closed. These examples raise the following
question:
\begin{question}\label{Q:1}
When does  the ring of endomorphisms of an  abelian group, which is a direct sum of cyclic groups,  have closed
Jacobson radical?
\end{question}

The following result gives a complete answer to Question \ref{Q:1}.

\begin{theorem}\label{T:3}
Let $A$ be a direct sum of cyclic groups. Then $\mathfrak{J}(EndA)$ is closed if and only if  every primary
component of $A$ is bounded.
\end{theorem}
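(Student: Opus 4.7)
The plan is to prove the two implications separately, with the bulk of the effort concentrated in the bounded-primary case.

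For the forward direction, suppose some primary component $A_p$ is unbounded, and pick cyclic summands $\gp{e_k}$ of $A_p$ with strictly increasing orders $p^{m_1}<p^{m_2}<\cdots$. Define $\alpha_k\in EndA$ by $\alpha_k(e_k)=pe_{k+1}$, extended by zero on every other summand of a fixed cyclic decomposition of $A$; this is well-defined since $p^{m_k}\!\cdot\! pe_{k+1}=0$, and $\alpha_k^2=0$. For any $y$ in the two-sided ideal generated by $\alpha_k$, iteration shows $y^n(A)\subseteq p^n\!\cdot\! S$ for a fixed finite subgroup $S\subset A$ depending on $y$; hence $y^n=0$ for $n$ large enough, so this ideal is nil and $\alpha_k\in\mathfrak{J}(EndA)$. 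The family $\{\alpha_k\}$ is summable in the finite topology, with sum $\alpha$ satisfying $\alpha(e_k)=pe_{k+1}$, and its partial sums witness $\alpha\in\overline{\mathfrak{J}(EndA)}$. On the other hand, $\alpha\notin\mathfrak{J}(EndA)$: if $(1-\alpha)(x)=e_1$ for some $x\in A$, then the coefficients $c_k$ of $x$ along the $e_k$'s satisfy $c_1=1$ and $c_k=pc_{k-1}$, forcing $c_k=p^{k-1}$. Since $m_k\geq k$, this never vanishes modulo $p^{m_k}$, contradicting the finite support of $x$.

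For the reverse direction, write $A=F\oplus T$ with $F$ free and $T=\bigoplus_p A_p$ torsion. Because $Hom(T,F)=0$, the set $I=Hom(F,T)\subset EndA$ is a square-zero two-sided ideal, so $I\subseteq\mathfrak{J}(EndA)$ and $EndA/I\cong EndF\times EndT$; since the quotient map is continuous for the finite topology, closedness of $\mathfrak{J}(EndA)$ reduces to closedness of $\mathfrak{J}(EndF)$ and $\mathfrak{J}(EndT)$. For $F$ free, reducing modulo a prime $p$ yields a surjection $EndF\to End(F/pF)$ onto the primitive ring of $\mathbb{F}_p$-linear endomorphisms of the vector space $F/pF$, so any $\alpha\in\mathfrak{J}(EndF)$ sends $F$ into $\bigcap_p pF=0$, giving $\mathfrak{J}(EndF)=0$. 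For torsion $T$, there is a topological ring isomorphism $EndT\cong\prod_p End(A_p)$ (the finite topology on the left being the product of finite topologies on the right), and Jacobson radicals commute with unital products, so everything reduces to showing $\mathfrak{J}(End(A_p))$ is closed for each bounded $A_p$.

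This last step is the main obstacle. The plan is to decompose $A_p$ of exponent $p^k$ as $A_p=\bigoplus_{i=1}^{k}B_i$ with $B_i=\mathbb{Z}(p^i)^{(I_i)}$. Each $B_i$ is a direct sum of copies of the injective $\mathbb{Z}/p^i\mathbb{Z}$-module $\mathbb{Z}(p^i)$, hence injective over $\mathbb{Z}/p^i\mathbb{Z}$ (since that ring is Noetherian) and so quasi-injective as an abelian group; Theorem~\ref{T:1} then gives that $\mathfrak{J}(End(B_i))$ is closed. Letting $e_i$ be the projection onto $B_i$, the Pierce decomposition $End(A_p)=\bigoplus_{i,j}e_iEnd(A_p)e_j$ identifies the diagonal corners with $End(B_i)$ and the off-diagonal corners with $Hom(B_j,B_i)$, each of bounded exponent $p^{\min(i,j)}$. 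For $\alpha\in\overline{\mathfrak{J}(End(A_p))}$, continuity of the corner maps $\alpha\mapsto e_i\alpha e_i$ in the finite topology forces the diagonal blocks into the closed sets $\mathfrak{J}(End(B_i))$; the bounded exponents of the off-diagonal blocks, combined with the finiteness of $k$, should then enable their absorption into $\mathfrak{J}(End(A_p))$, yielding $\alpha\in\mathfrak{J}(End(A_p))$.
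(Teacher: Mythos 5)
Your overall architecture is sound, and several of your reductions are correct and in places cleaner than the paper's: splitting $A=F\oplus T$ and passing to $EndF\times EndT$ modulo the square-zero ideal $Hom(F,T)$, the identification $EndT\cong\prod_p End(A_p)$ as topological rings, and the argument $\mathfrak{J}(EndF)\subseteq\bigcap_p\ker\bigl(EndF\to End(F/pF)\bigr)=0$ are all fine (the paper reaches the same reductions via a triangular-matrix lemma and a basis argument for the free part). However, there are two genuine problems.

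First, in the forward direction your map $\alpha_k$ is not well defined. Sending $e_k\mapsto pe_{k+1}$ extends to a homomorphism $\mathbb{Z}(p^{m_k})\to\mathbb{Z}(p^{m_{k+1}})$ only if $p^{m_k}\cdot pe_{k+1}=0$, i.e.\ only if $m_{k+1}\le m_k+1$; since the orders are strictly increasing this forces $m_{k+1}=m_k+1$, which you cannot arrange in general (e.g.\ $A_p=\bigoplus_i\mathbb{Z}(p^{2i})$). The justification you give for well-definedness is simply false for gaps larger than one. The fix is the paper's choice $\alpha_k(e_k)=p^{m_{k+1}-m_k}e_{k+1}$; your nilpotence and summability arguments survive unchanged, and the recursion becomes $c_k=p^{m_k-m_1}$, still nonzero in $\mathbb{Z}(p^{m_k})$ because $m_1\ge1$, so the contradiction with finite support persists.

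Second, and more seriously, the crux of the bounded case is left unproved. From $\alpha\in\overline{\mathfrak{J}(End A_p)}$ you correctly deduce $e_i\alpha e_i\in\mathfrak{J}(End B_i)$ for each $i$, but to conclude $\alpha\in\mathfrak{J}(End A_p)$ you must show that the set $W=\{\beta:\ e_i\beta e_i\in\mathfrak{J}(End B_i)\ \text{for all } i\}$ is contained in $\mathfrak{J}(End A_p)$, i.e.\ that $W$ is quasi-regular. This is not a formal consequence of the off-diagonal corners having bounded exponent; one needs the specific inclusions $Hom(B_j,B_i)\,Hom(B_i,B_j)\subseteq\mathfrak{J}(End B_i)$ for $i\ne j$ (such compositions factor through multiplication by $p$) together with the identification $\mathfrak{J}(End B_i)=p\,End B_i$ and its nilpotency, from which $W$ is seen to be a nilpotent ideal. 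Your quasi-injectivity route via Theorem~\ref{T:1} delivers only the \emph{closedness} of $\mathfrak{J}(End B_i)$, not its computation as $p\,End B_i$ nor its nilpotency, so the ingredients needed for the "absorption" are not actually in place; "should then enable" is precisely where the paper expends its effort (Lemmas~\ref{L:3} and~\ref{L:4}). Until that step is carried out, the reverse implication is not established.
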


Recall that the set of prime natural numbers is  denoted by $\mathbb{P}$.

\begin{lemma}\label{L7:2}Let $p$ be a prime number and $A=\mathbb{Z}(p^m)\times \mathbb{Z}(p^n)$ where $m<n$. We assume that $\mathbb{Z}(p^m)$
is embedded in $\mathbb{Z}(p^n)$ and $\mathbb{Z}(p^m)=p^{n-m}\mathbb{Z}(p^n)$. Let $\alpha_{m,n}:(x,y)\mapsto (0,x)$. Then $\alpha_{m,n}\in \frak{J}(EndA)$.
\end{lemma}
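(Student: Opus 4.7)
The plan is to show $\alpha_{m,n}\in\mathfrak{J}(\operatorname{End} A)$ by establishing that $\phi\alpha_{m,n}$ is nilpotent for every $\phi\in\operatorname{End} A$. Once this is done, $1-\phi\alpha_{m,n}$ has a two-sided inverse (the finite geometric series), and by the standard characterization $\alpha\in\mathfrak{J}(R)\iff 1-r\alpha$ is invertible for every $r\in R$, it will follow that $\alpha_{m,n}\in\mathfrak{J}(\operatorname{End} A)$.

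First I would unpack $\alpha_{m,n}$ on the natural generators $e_1=(1,0)$ and $e_2=(0,1)$ of $A$. Under the prescribed identification $\mathbb{Z}(p^m)=p^{n-m}\mathbb{Z}(p^n)$, the generator $1\in\mathbb{Z}(p^m)$ corresponds to $p^{n-m}\in\mathbb{Z}(p^n)$, so $\alpha_{m,n}(e_1)=p^{n-m}e_2$ and $\alpha_{m,n}(e_2)=0$. In particular $\alpha_{m,n}(A)\subseteq p^{n-m}A$, which is the single observation that drives the rest of the argument.

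Next, for arbitrary $\phi\in\operatorname{End} A$ set $\beta=\phi\alpha_{m,n}$. Using that $\phi$ is $\mathbb{Z}$-linear one obtains
\[
\beta(A)=\phi(\alpha_{m,n}(A))\subseteq\phi(p^{n-m}A)=p^{n-m}\phi(A)\subseteq p^{n-m}A,
\]
and a straightforward induction then gives $\beta^{k}(A)\subseteq p^{k(n-m)}A$ for every $k\geq 1$. Since $A$ is annihilated by $p^n$ and $n-m\geq 1$, choosing any $k\geq n$ yields $\beta^{k}=0$; hence $\beta$ is nilpotent and $1-\beta$ is invertible with inverse $\sum_{i=0}^{k-1}\beta^i$. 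As $\phi$ was arbitrary, this concludes the proof.

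There is no genuinely hard step here: the whole argument is a short valuation-bookkeeping once one reads off that $\operatorname{Im}\alpha_{m,n}\subseteq p^{n-m}A$. The only place that asks for a little care is to translate the identification $\mathbb{Z}(p^m)=p^{n-m}\mathbb{Z}(p^n)$ into the formula $\alpha_{m,n}(e_1)=p^{n-m}e_2$, from which the rest of the proof is formal.
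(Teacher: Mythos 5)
Your proof is correct and follows essentially the same route as the paper: the driving observation $\alpha_{m,n}(A)\subseteq p^{n-m}A$ and the induction $( \phi\alpha_{m,n})^{k}(A)\subseteq p^{k(n-m)}A$ are exactly the paper's computation. The only (immaterial) difference is the last step: the paper concludes that the left ideal $(\operatorname{End}A)\alpha_{m,n}$ is nilpotent and hence lies in $\mathfrak{J}(\operatorname{End}A)$, while you conclude via the equivalent quasi-regularity criterion that $1-\phi\alpha_{m,n}$ is invertible for every $\phi$.
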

\begin{proof}
It follows from definition of $\alpha_{m,n}$ that $\alpha_{m,n}(A)\subset p^{n-m}A$, hence
$\beta_1\alpha_{m,n}(A)\subset p^{n-m}A$ for each $\beta_1\in EndA$. By induction,
\[
(\beta_i\alpha_{m,n})\cdots (\beta_i\alpha_{m,n})\in p^{(n-m)i}A
\]
for every $i\in\mathbb N$. This implies that $(EndA)\alpha_{m,n}$ is a nilpotent ideal, hence $\alpha_{m,n}\in \frak{J}(EndA)$.\end{proof}

\begin{lemma}\label{L:1}
Let  $A=\mathbb{Z}(p^m)\oplus  \mathbb{Z}(p^n)$,  where $p\in \mathbb{P}$ and  $m<n$. Let
$\tau: \mathbb{Z}(p^m)\rightarrow p^{n-m}\mathbb{Z}(p^n)\subset \mathbb{Z}(p^n)$ be an embedding homomorphism. Then
\[
\alpha_{mn}:(x,y)\mapsto \big(0,\; \tau(x)\big) \in \mathfrak{J}(EndA).
\]
\end{lemma}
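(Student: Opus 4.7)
The plan is to reproduce the nilpotency argument of Lemma \ref{L7:2} almost verbatim; the only difference between the two lemmas is that the embedding of $\mathbb{Z}(p^m)$ into $\mathbb{Z}(p^n)$ is now named explicitly by $\tau$, and since $\tau$ lands in $p^{n-m}\mathbb{Z}(p^n)$, nothing substantial changes.

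First I would observe from the definition of $\tau$ that $\alpha_{mn}(A)\subseteq \{0\}\oplus p^{n-m}\mathbb{Z}(p^n)\subseteq p^{n-m}A$. The subgroup $p^{n-m}A$ is fully invariant in $A$, since any endomorphism $\beta$ satisfies $\beta(p^{n-m}a)=p^{n-m}\beta(a)$; consequently $\beta\alpha_{mn}(A)\subseteq p^{n-m}A$ for every $\beta\in EndA$. An easy induction then gives
\[
(\beta_1\alpha_{mn})(\beta_2\alpha_{mn})\cdots(\beta_k\alpha_{mn})(A)\subseteq p^{k(n-m)}A
\]
for all $k\in\mathbb{N}$ and any choices of $\beta_i\in EndA$.

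Since $A$ has exponent $p^n$, any $k$ with $k(n-m)\geq n$ forces $p^{k(n-m)}A=0$, so compositions of $k$ elements of the left ideal $(EndA)\alpha_{mn}$ vanish. Thus $(EndA)\alpha_{mn}$ is a nilpotent left ideal of $EndA$, hence contained in $\mathfrak{J}(EndA)$; since $EndA$ has an identity, $\alpha_{mn}$ itself belongs to $\mathfrak{J}(EndA)$. There is no real obstacle here: the crux is simply that $p^{n-m}A$ is fully invariant, and this same fact already forced the conclusion in Lemma \ref{L7:2}.
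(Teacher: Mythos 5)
Your argument is correct and is essentially the paper's own proof: both show $\alpha_{mn}(A)\subseteq p^{n-m}A$, use the full invariance of $p^{n-m}A$ and induction to see that products of $k$ elements of the left ideal $(EndA)\alpha_{mn}$ land in $p^{k(n-m)}A=0$ for large $k$, and conclude that this nilpotent left ideal lies in $\mathfrak{J}(EndA)$. Your write-up is in fact slightly more careful (allowing different $\beta_i$'s and noting that the identity puts $\alpha_{mn}$ itself in the ideal), but the route is the same.
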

\begin{proof}
Since $\alpha_{mn}(A)\subset p^{n-m}A$ by the definition of $\alpha_{mn}$,  we obtain  that
$\beta\alpha_{mn}(A)\subset p^{n-m}A$ for each $\beta\in EndA$. By induction on $k\in \omega$, we get that
\[
\underbrace{(\beta\alpha_{mn})\cdots (\beta\alpha_{mn})}_{k}\in p^{(n-m)k}A.
\]
This implies that $(EndA)\alpha_{mn}$ is a nilpotent left ideal, hence
\[
\alpha_{mn}\in \mathfrak{J}(EndA).
\]
\end{proof}

\begin{lemma}\label{L:2}
Let $A=\oplus_{i\in\omega}\mathbb{Z}(p^{k_i})$,  where $p\in \mathbb{P}$ and   $k_i<k_{i+1}$ for  $i\in\omega$.
Then $\mathfrak{J}(EndA)$ is not closed.
\end{lemma}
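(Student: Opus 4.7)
The plan is to exhibit an explicit endomorphism $\alpha\in EndA$ which is a limit, in the finite topology, of elements of $\mathfrak{J}(EndA)$, yet fails to lie in $\mathfrak{J}(EndA)$ itself. Let $e_i$ be a generator of the summand $\mathbb{Z}(p^{k_i})$, and define the ``shift'' endomorphism $\alpha$ on generators by
\[
\alpha(e_i) = p^{k_{i+1}-k_i}\,e_{i+1},
\]
extended by linearity. This is a well-defined endomorphism of $A$ because every element has finite support. For each $n\in\omega$ define the truncated shift $\gamma_n$ by $\gamma_n(e_i)=p^{k_{i+1}-k_i}e_{i+1}$ for $i<n$ and $\gamma_n(e_i)=0$ for $i\geq n$. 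Then $\gamma_n$ factors as $\gamma_n=\iota_n\gamma_n'\pi_n$, where $\pi_n\colon A\twoheadrightarrow A_n:=\bigoplus_{i=0}^{n}\mathbb{Z}(p^{k_i})$ is the projection, $\iota_n\colon A_n\hookrightarrow A$ is the inclusion, and $\gamma_n'\in End(A_n)$ is the corresponding endomorphism of the bounded summand $A_n$.

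The sequence $\gamma_n$ converges to $\alpha$ in $\mathcal{T}_{fin}$: $\gamma_n$ and $\alpha$ agree on $A_{n-1}$, and any finite $K\subset A$ is contained in some $A_{N-1}$, so $\gamma_n-\alpha\in T(K)$ for all $n\geq N$. Thus it remains to show that each $\gamma_n$ lies in $\mathfrak{J}(EndA)$ while $\alpha$ itself does not.

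To show $\gamma_n\in\mathfrak{J}(EndA)$, fix $\delta\in EndA$ and set $\mu:=\pi_n\delta\iota_n\in End(A_n)$. A direct expansion gives
\[
(\delta\gamma_n)^{k}=\delta\iota_n\,(\gamma_n'\mu)^{k-1}\gamma_n'\pi_n.
\]
Since $\gamma_n'(A_n)\subseteq pA_n$, iterating shows $(\gamma_n'\mu)^{k-1}(A_n)\subseteq p^{k-1}A_n$. As $A_n$ has exponent $p^{k_n}$, this subgroup vanishes as soon as $k-1\geq k_n$. Hence $\delta\gamma_n$ is nilpotent for every $\delta$, so $1-\delta\gamma_n$ is a unit of $EndA$ (with inverse $\sum_{j\geq 0}(\delta\gamma_n)^j$, a finite sum), which means $\gamma_n\in\mathfrak{J}(EndA)$. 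Consequently $\alpha\in\overline{\mathfrak{J}(EndA)}$. The spirit of this step is the same as in Lemma \ref{L:1}, but since $A$ is unbounded we must route the nilpotency argument through the bounded summand $A_n$ via the factorization $\gamma_n=\iota_n\gamma_n'\pi_n$; this is the main technical obstacle.

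Finally, to rule out $\alpha\in\mathfrak{J}(EndA)$ it suffices to show that $1-\alpha$ is not a unit, and for this I will show it is not surjective. Assume for contradiction that $(1-\alpha)(x)=e_0$ for some $x=\sum_j c_j e_j$ of finite support, with $c_j\in\mathbb{Z}(p^{k_j})$. Comparing coefficients at each $e_j$ yields $c_0=1$ and the recursion $c_{j+1}=c_j\,p^{k_{j+1}-k_j}$, so inductively $c_j=p^{k_j-k_0}$. Since $k_j-k_0<k_j$, every $c_j$ is nonzero in $\mathbb{Z}(p^{k_j})$, contradicting the finite support of $x$. Therefore $\alpha\notin\mathfrak{J}(EndA)$, while $\alpha\in\overline{\mathfrak{J}(EndA)}$, proving that $\mathfrak{J}(EndA)$ is not closed in the finite topology.
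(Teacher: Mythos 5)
Your argument is correct and is essentially the paper's own proof in different clothing: the paper's summable family $\{\gamma_i\}$ sums to exactly your weighted shift $\alpha$, and your truncations $\gamma_n$ are precisely the partial sums of that family, so the witness in the closure and its approximants coincide with the paper's. The only differences are cosmetic: you certify $\gamma_n\in\mathfrak{J}(EndA)$ by a direct nilpotency argument routed through the bounded summand $A_n$ (the paper instead invokes Lemma \ref{L:1} together with the corner identity $\mathfrak{J}(e_i(EndA)e_i)=\mathfrak{J}(EndA)\cap e_i(EndA)e_i$), and you exclude $\alpha$ from the radical by showing $1-\alpha$ is not surjective rather than exhibiting the failure of quasi-regularity, which amounts to the same coefficient computation.
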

\begin{proof}
Let $\beta_i=\alpha_{k_ik_{i+1}}$  be the endomorphism in  Lemma \ref{L:1}, where $i\in\omega$. We extend $\beta_i$
to an endomorphism $\gamma_i$ of $A$, setting  $\gamma_i=\beta_i$ on $B_i=\mathbb{Z}(p^{k_i})\oplus \mathbb{Z}(p^{k_{i+1}})$
and $\beta_i(\mathbb{Z}(p^{k_l}))=0$ for $l\neq i$ or $l\neq i+1$.

The projection of $A$ onto $B_i$ is denoted by $e_i$. Consider the embedding $\psi_i:End(B_i)\to EndA$, where
\[
 \alpha\mapsto\psi_i(\alpha)=\begin{cases}\alpha& \text{if}\quad \alpha\in B_i;\\ 0& \text{if}\quad \alpha\not \in B_i.
\end{cases}
\]

Thus $\psi_i(End(B_i))=e_i(EndA)e_i$. Furthermore,
\[
\begin{split}
\gamma_i&=\psi_i(\beta_i)\in \mathfrak{J}(e_i(EndA)e_i)\\
&=\mathfrak{J}(EndA)\cap e_i(EndA)e_i\subset \mathfrak{J}(EndA).
\end{split}
\]
We claim that the family $\{\gamma_i\}_{i\in\omega}$ is summable. For if $a\in A$, then there exists $n\in\omega$ such that
$a\notin B_i$ for $i>n$. Then $\gamma_i(a)=0$ for $i>n$, hence $\{\gamma_i\}_{i\in\omega}$  is summable.

Let $\gamma=\sum_{i\in\omega}\gamma_i$. We claim that $\beta=\Sigma_{i\in\omega}\beta_i$ is not right quasi-regular. Assume
on the contrary that there exists $\beta^\prime\in EndA$ such that $\beta+\beta^\prime+\beta\beta^\prime=0$. Let
$x=(\theta00\ldots)\in A$ and let $\beta^\prime(\theta00\ldots)=(x_0x_1\ldots)$, where $\theta$ is a generator
of $\mathbb{Z}(p^{k_0})$.  Then
\[
(0\theta0\ldots)+(x_0x_1x_2\ldots)+(0x_0x_1\ldots)=0,
\]
so $x_0=0$,\quad  $x_1=-\theta$,\quad \ldots, $x_n=(-1)^n\theta$,\quad \ldots $(n\geq 2)$, a contradiction.
The last property implies that the ideal $\mathfrak{J}(EndA)$ is not closed.
\end{proof}

\begin{lemma}\label{L:3}
If $A=\oplus_{\alpha\in\Omega}H_\alpha$, where $H_\alpha=\mathbb{Z}(p^n)$ for all $\alpha\in\Omega$, then
\[
\mathfrak{J}(EndA)=pEndA \quad \text{and}\quad \mathfrak{J}(EndA)^n=0.
\]
\end{lemma}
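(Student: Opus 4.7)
The plan is to establish both inclusions of $\mathfrak{J}(EndA)=pEndA$; the nilpotence $\mathfrak{J}(EndA)^n=0$ will then follow automatically. For the easy direction $pEndA\subseteq\mathfrak{J}(EndA)$, I would note that $pEndA$ is a two-sided ideal (since the integer $p$ lies in the center of $EndA$) and that $(pEndA)^n = p^n\,EndA = 0$, because every element of $A$ has exponent dividing $p^n$. A nilpotent two-sided ideal is automatically contained in the Jacobson radical, and once equality of the two ideals is proved this will simultaneously give $\mathfrak{J}(EndA)^n=0$.

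The substantive part is the reverse inclusion $\mathfrak{J}(EndA)\subseteq pEndA$, which I would prove by contrapositive. Fix a generator $e_\alpha$ of each $H_\alpha$, and suppose $\phi\in EndA\setminus pEndA$. Then $\phi(e_{\alpha_0})\notin pA$ for some index $\alpha_0$, so in the finite expansion $\phi(e_{\alpha_0})=\sum_{\beta\in F}c_\beta e_\beta$ at least one coefficient $c_{\beta_0}$ is coprime to $p$ and hence a unit in $\mathbb{Z}(p^n)$. I would then define $\psi\in EndA$ by $\psi(e_{\beta_0})=c_{\beta_0}^{-1}e_{\alpha_0}$ and $\psi(e_\gamma)=0$ for all $\gamma\neq\beta_0$. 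A direct computation gives $\psi\phi(e_{\alpha_0})=e_{\alpha_0}$, so $(1-\psi\phi)(e_{\alpha_0})=0$; thus $1-\psi\phi$ fails to be injective, cannot be left-invertible, and this contradicts $\phi\in\mathfrak{J}(EndA)$.

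The only delicate point is checking that the map $\psi$ just defined is a genuine endomorphism of $A$: one must verify that sending $e_{\beta_0}$ (of order $p^n$) to $c_{\beta_0}^{-1}e_{\alpha_0}$ respects the relation $p^n e_{\beta_0}=0$. This is immediate because every summand $H_\gamma$ has the common exponent $p^n$, and it is precisely this homogeneity hypothesis $H_\alpha\cong\mathbb{Z}(p^n)$ that makes the transposition construction available, distinguishing the present lemma from the non-homogeneous situation of Lemma \ref{L:2}. No serious obstacle is anticipated.
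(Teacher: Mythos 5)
Your proof is correct and takes essentially the same route as the paper: the easy inclusion comes from the nilpotence of $pEndA$, and for the reverse inclusion one picks a generator whose image lies outside $pA$, extracts a coefficient that is a unit modulo $p^n$, and builds an auxiliary endomorphism supported on one generator. The only (inessential) difference is in the closing step: the paper composes further with a projection to exhibit a nonzero idempotent inside $\mathfrak{J}(EndA)$, while you conclude directly from the fact that $1-\psi\phi$ kills $e_{\alpha_0}$ and hence cannot be invertible.
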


\begin{proof}
Since $(pEndA)^n=0$, we have  $pEndA\subset \mathfrak{J}(EndA)$. Denote  a generator of $\mathbb{Z}(p^n)_\alpha$
by $\theta_\alpha$, where $\alpha\in \Omega$.

Assume that there exists an element $j\in \mathfrak{J}(EndA)\setminus pEndA$. Then there exists $\beta\in\Omega$
such that $j(\theta_\beta)\notin pA$. Indeed, otherwise for each $\alpha\in\Omega$ there exists $a_\alpha\in A$
such that $j(\theta_\alpha)=pa_\alpha$. The mapping $q$ from $\{\theta_\alpha\}_{\alpha\in\Omega}$ in $A$ which
sends $\theta_\alpha$ to $a_\alpha$ has an extension to an endomorphism $q_1$ of $A$ and $j=pq_1$, a contradiction.

Let $j(\theta_\beta)=k_1\theta_{\alpha_1}+\cdots+k_m\theta_{\alpha_m}$. We can assume without loss of generality
that $(k_1,p)=1$. There exists a natural number $k$ such that $kk_1\theta{\alpha_1}=\theta_{\alpha_1}$. Then
$kj\in\mathfrak{J}(EndA)$ and
\[
kj(\theta_\alpha)=\theta_{\alpha_1}+\cdots+k_m\theta_{\alpha_m}.
\]
Clearly  we can assume that
$j(\theta_\beta)=\theta_{\alpha_1}+\cdots+k_m\theta_{\alpha_m}$.

Let  $j_1\in EndA$,  such that $j_1(\theta_{\alpha_1})=\theta_\beta$ and $j_1(\theta_\gamma)=0$ for $\gamma\neq\alpha_1$.
Define $j_2\in EndA$, such that $j_2(\theta_\beta)=\theta_\beta$ and $j_2(\theta_\gamma)=0$ for $\gamma\neq \beta$.  Then
\[
j_1jj_2(\theta_\beta)=j_1j(\theta_\beta)=j_1(\theta_{\alpha_1}+k_2\theta_{\alpha_2}+\cdots+k_m\theta_{\alpha_m})=
\theta_\beta
\]
and $j_1jj_2(\theta_\gamma)=0$ for $\gamma\neq \alpha$. Therefore $0\neq j_2=j_1jj_2\in\mathfrak{J}(EndA)$ is an idempotent,
a contradiction.
\end{proof}

\begin{lemma}\label{L:4}
If  $A$ is  a bounded abelian group,  then $\mathfrak{J}(EndA)$ is a nilpotent ideal.
\end{lemma}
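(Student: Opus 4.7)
The plan is to reduce to a single prime and then exploit the $n$-step block decomposition of a bounded $p$-group together with the fact (Lemma~\ref{L:3}) that the Jacobson radical of the endomorphism ring of a homogeneous direct sum of copies of $\mathbb{Z}(p^k)$ equals $p\cdot End(A_k)$. The heart of the argument is that every $f\in\mathfrak{J}(EndA)$ acts as a strictly upper triangular operator on the $\mathbb{F}_p$-vector space $A/pA$.

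First I would reduce to the primary case. Since $A$ is bounded, only finitely many primary components of $A=\bigoplus_{p\in\mathbb{P}}A_p$ are nonzero, and $Hom(A_p,A_q)=0$ for distinct primes, so $EndA\cong\prod_p End(A_p)$ and $\mathfrak{J}(EndA)\cong\prod_p\mathfrak{J}(End(A_p))$; a finite product of nilpotent ideals is nilpotent, so it suffices to treat a single bounded $p$-group.

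Assume $p^nA=0$ and decompose $A=\bigoplus_{k=1}^{n}A_k$ with $A_k=\bigoplus_{\alpha\in I_k}\mathbb{Z}(p^k)$ homogeneous (Pr\"ufer). Let $e_k\in EndA$ be the projection onto $A_k$; these are orthogonal idempotents summing to $1$, and write $f_{kl}=e_kfe_l\in Hom(A_l,A_k)$. Two structural facts drive the argument. For $k>l$, any $g\in Hom(A_l,A_k)$ has image in $A_k[p^l]=p^{k-l}A_k\subset pA_k$, because an image of an element of order $p^l$ in a sum of copies of $\mathbb{Z}(p^k)$ automatically lies in the subgroup $p^{k-l}A_k$. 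For $f\in J:=\mathfrak{J}(EndA)$, the corner-ring identity $e_kJe_k=\mathfrak{J}(e_kEndA\,e_k)$ together with $e_kEndA\,e_k\cong End(A_k)$ and Lemma~\ref{L:3} gives $f_{kk}\in p\cdot End(A_k)$, so $f_{kk}(A_k)\subset pA_k$. Consequently, every $f\in J$ induces the zero map $\bar f_{kl}:A_l/pA_l\to A_k/pA_k$ whenever $k\geq l$, so the induced endomorphism $\bar f$ of $A/pA=\bigoplus_k A_k/pA_k$ is strictly upper triangular with respect to this $n$-fold decomposition.

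Any product of $n$ strictly upper triangular operators on an $n$-block decomposition vanishes, so for any $f_1,\ldots,f_n\in J$ one has $\overline{f_1\cdots f_n}=\bar f_1\cdots\bar f_n=0$, which says $f_1\cdots f_n(A)\subset pA$, i.e.\ $J^n\cdot A\subset pA$. Because each $\mathbb{Z}$-linear map satisfies $f(p^jA)=p^jf(A)$, this estimate iterates: $J^{kn}\cdot A\subset p^kA$ for all $k\geq 0$, and taking $k=n$ gives $J^{n^2}\cdot A\subset p^nA=0$, hence $J^{n^2}=0$. The main obstacle is identifying the correct filtration: the naive expectation $f(A)\subset pA$ for $f\in J$ is false because off-diagonal blocks $f_{kl}$ with $k<l$ are unrestricted, but passing to $A/pA$ exposes precisely the strict upper triangular pattern that, combined with the $p$-adic filtration, forces nilpotency.
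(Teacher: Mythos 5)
Your proof is correct, and it reaches the conclusion by a genuinely different (and more quantitative) route than the paper. The paper also starts from the Pr\"ufer decomposition into homogeneous components and Lemma~\ref{L:3}, but it then argues by induction on the number of components: it splits off the component of smallest exponent, realizes $EndA$ as a $2\times 2$ matrix ring $U$, shows the two off-diagonal products land in the respective radicals so that $\mathfrak{J}(U)$ is exactly the ``radical-diagonal'' set $W_U$, and finally writes $W_U$ as a sum of two nilpotent one-sided ideals. You instead treat all $n$ blocks at once: the two inclusions you isolate --- $Hom(A_l,A_k)\subset Hom(A_l,pA_k)$ for $k>l$ because images of $p^l$-torsion land in $A_k[p^l]=p^{k-l}A_k$, and $e_k\mathfrak{J}(EndA)e_k=\mathfrak{J}(e_k(EndA)e_k)\subset p\,End(A_k)$ via the corner-ring identity and Lemma~\ref{L:3} --- are the same structural facts the paper uses, but you package them as strict upper triangularity of the induced operator on $A/pA$, and then bootstrap through the filtration $A\supset pA\supset\cdots\supset p^nA=0$ using $f(p^jA)=p^jf(A)$. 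This avoids the induction and the matrix-ring bookkeeping entirely, does not require identifying $\mathfrak{J}(U)$ exactly (only the one inclusion you need), and yields an explicit nilpotency bound $\mathfrak{J}(EndA)^{n^2}=0$ where $p^nA=0$, which the paper's inductive argument does not make explicit. All the individual steps (the reduction over finitely many primes, the corner-ring identity, the vanishing of a product of $n$ strictly upper triangular block operators, and the iteration $J^{kn}A\subset p^kA$) check out.
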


\begin{proof}
We can assume without loss of generality that $A$ is a $p$-group. It follows from  Pr\"ufer's theorem (see \cite{Fuchs}, Theorem
17.3, p. 88) that there exists a decomposition $A=A_{n_1}\oplus\cdots\oplus A_{n_k}$ such that  $n_1>\cdots> n_k$ and $A_{n_i}$
is a direct sum of copies of $\mathbb{Z}(p^{n_i})$ for  $1\leq i\leq k$.

We prove our lemma  by induction on $k$. For $k=1$  the statement of the lemma follows from Lemma \ref{L:3}.

Assume that our lemma has been proved for $k-1$. Consider the abelian groups $B=A_{n_1}\oplus\cdots\oplus A_{n_{k-1}}$
and  $C=A_{n_k}$ and also the following matrix rings
\[
U=\left(
   \begin{smallmatrix}
     Hom(B,B) & Hom(C,B) \\
         Hom(B,C) & Hom(C,C) \\
   \end{smallmatrix}
 \right)
\quad
and
\quad
W_U=\left(
   \begin{smallmatrix}
   \mathfrak{ J}(EndB) & Hom(C,B) \\
         Hom(B,C) & \mathfrak{ J}(EndC) \\
   \end{smallmatrix}
 \right).
\]
Clearly, $A=B\oplus C$ and  $EndA\cong U$.

We claim that the Jacobson radical $\mathfrak{J}(U)$ is $W_U$.

First we prove the following two inclusions:
\begin{equation}\label{E:1}
\begin{split}
Hom(C,B)Hom(B,C)&\subset\mathfrak{ J}(EndB);\\
Hom(B,C)Hom(C,B)&\subset \mathfrak{J}(EndC).\\
\end{split}
\end{equation}
Indeed, let $\alpha\in Hom(C,B)$, $\beta\in Hom(B,C)$ and $b\in B$. This yields that  $\beta(b)\in C$,
hence $\alpha\beta(b)\in pB$. Therefore $\alpha\beta(B)\subset pB$  since $n_i<n_k$ for all $i<k$.

If $\gamma\in EndB$, then $\gamma\beta\alpha(B)\subset pB$. It follows that $(\gamma\beta\alpha)^n=0$.
This implies that $\alpha\beta\in \mathfrak{J}(EndB)$, hence the first equation of (\ref{E:1}) is proved.

Now, let $\alpha\in Hom(B,C)$, $\beta\in Hom(C,B)$ and $c\in C$. Clearly, then $\beta(c)\in pB$, so
$\alpha\beta(b)\in pC$. We prove in a similar way that
\[
\alpha\beta\in \mathfrak{J}(EndC).
\]
It is easy to see that the map
\[
\psi: U\rightarrow  \big( EndB/\mathfrak{J}(EndB)\big)\times \big(EndC/\mathfrak{J}(EndC)\big),
\]
defined by
\[
\left(
   \begin{smallmatrix}
   \alpha_{11} & \alpha_{12} \\
       \alpha_{21} &\alpha_{22} \\
   \end{smallmatrix}
 \right)\mapsto \big(\; \alpha_{11}+\mathfrak{J}(EndB), \quad \alpha_{22}+\mathfrak{J}(EndC)\;\big)
\]
is a ring homomorphism, by (\ref{E:1}). Moreover $\mathfrak{J}(U)\subset \mathfrak{Ker}(\psi)= W_U$.

It suffices to show that the ideal $W_U$ is nilpotent.

Obviously $ T_1=\left(\begin{smallmatrix}
\mathfrak{J}(EndB) & 0 \\
            Hom(B,C) & 0 \\
\end{smallmatrix}
\right)$
is a left ideal of the ring $U$ and    from the inductive assumption we get  that  it  is nilpotent
of class $2n$.  Similarly the same is true for the left ideal $T_2= \left(\begin{smallmatrix}
     0 & Hom(C,B) \\
         0 & \mathfrak{J}(EndC)\\
   \end{smallmatrix}
 \right)$. This yields that    $ U_W=T_1+T_2$ is a nilpotent ideal of the ring $U$.
\end{proof}

\bigskip
\begin{corollary}\label{C:2}
The Jacobson radical of the ring of endomorphisms of a bounded  abelian group is closed.
\end{corollary}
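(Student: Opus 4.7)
The plan is to read off from the proof of Lemma~\ref{L:4} an explicit description of $\mathfrak{J}(EndA)$ and observe that it cuts out a closed subset of $EndA$ in the finite topology.

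First I would reduce to a bounded $p$-group. Since $nA=0$ for some $n$, only finitely many primes appear in the primary decomposition $A = A_{p_1}\oplus\cdots\oplus A_{p_s}$, and each $A_{p_i}$ is itself bounded. This induces a topological ring isomorphism $EndA \cong \prod_i End(A_{p_i})$ for the finite topology, under which the Jacobson radical splits as a product; closedness then reduces to the case of a single bounded $p$-group. Writing $A = A_{n_1}\oplus\cdots\oplus A_{n_k}$ with $A_{n_i}$ homocyclic of exponent $p^{n_i}$ (Pr\"ufer), the proof of Lemma~\ref{L:4} establishes the inclusion $\mathfrak{J}(EndA)\subseteq W_U$ and the nilpotency of $W_U$; since every nilpotent one-sided ideal sits inside the Jacobson radical, we obtain the equality $\mathfrak{J}(EndA) = W_U$. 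Combined with Lemma~\ref{L:3}, this yields the explicit characterization: $\alpha \in \mathfrak{J}(EndA)$ if and only if, for every $i$, the diagonal block $\pi_i\alpha\iota_i$ lies in $pEnd(A_{n_i})$, where $\iota_i$ and $\pi_i$ are the canonical inclusion and projection of the summand $A_{n_i}$.

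To conclude, observe that $pEnd(A_{n_i}) = \{\beta \in End(A_{n_i}) : \beta(A_{n_i})\subseteq pA_{n_i}\}$ (as in the proof of Lemma~\ref{L:3}, using the universal property of a direct sum of copies of $\mathbb{Z}(p^{n_i})$). Hence
\[
\mathfrak{J}(EndA) = \bigcap_{i=1}^{k}\bigcap_{a \in A_{n_i}}\bigl\{\alpha \in EndA : \pi_i\alpha(a) \in pA_{n_i}\bigr\}.
\]
For fixed $i$ and $a$ the evaluation $\alpha\mapsto\pi_i\alpha(a)$ is continuous from $(EndA,\mathcal{T}_{fin})$ into the discrete group $A$, and the preimage of $pA_{n_i}$ is closed; an intersection of closed sets is closed. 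The one mildly delicate step is extracting the equality $\mathfrak{J}(EndA) = W_U$ from Lemma~\ref{L:4}, which is stated as a nilpotency result; beyond that the argument is routine and I do not anticipate any serious obstacle.
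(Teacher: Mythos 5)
Your argument is correct, but it reaches closedness by a different mechanism than the paper. The paper deduces Corollary~\ref{C:2} directly from the statement of Lemma~\ref{L:4}: since $(EndA,\mathcal T_{fin})$ is a Hausdorff topological ring, the closure of a nilpotent ideal is again nilpotent (because $\overline{\mathfrak J}^{\,n}\subset\overline{\mathfrak J^{\,n}}=\overline{0}=0$), hence quasi-regular, hence contained in $\mathfrak J(EndA)$, so the radical is closed; only the nilpotency is needed, not the identity of the radical. You instead mine the proof of Lemma~\ref{L:4} for the equality $\mathfrak J(U)=W_U$ (which the paper does establish: the inclusion $\mathfrak J(U)\subset W_U$ via the homomorphism $\psi$, and $W_U\subset\mathfrak J(U)$ from nilpotency of $W_U$), combine it with Lemma~\ref{L:3} and the observation $pEnd(A_{n_i})=\{\beta:\beta(A_{n_i})\subset pA_{n_i}\}$ valid for homocyclic groups, and obtain the explicit block description of $\mathfrak J(EndA)$ as the set of endomorphisms whose diagonal blocks land in $pA_{n_i}$; closedness is then immediate since each condition $\pi_i\alpha(a)\in pA_{n_i}$ is clopen in the finite topology, and your preliminary reduction to a single prime via $EndA\cong\prod_i End(A_{p_i})$ (legitimate because a bounded group has only finitely many nontrivial primary components) is also fine. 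What your route buys is an explicit, Pierce-style description of the radical of the endomorphism ring of a bounded $p$-group, from which closedness is visible by inspection; what the paper's route buys is brevity and robustness, since it uses only the general principle that a dense-in-itself enlargement of a nilpotent ideal stays inside the radical and would survive even without knowing $\mathfrak J(U)=W_U$ exactly. The one step you flagged yourself, extracting the equality from a lemma stated only as a nilpotency result, is indeed the only non-routine point, and it is covered by the paper's internal claim in the proof of Lemma~\ref{L:4}, so there is no gap.
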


Note that this  corollary  can be obtained from  theorem 6.1  of \cite{Mohamed_Muller}, but  our proof is based on different arguments.

\begin{lemma}\label{L:5}
If $A$ is a free group, then   $\mathfrak{J}(EndA)=0$.
\end{lemma}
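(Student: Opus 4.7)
The plan is to write the free abelian group as $A = \bigoplus_{i \in I}\mathbb{Z}e_i$ and to show that any $\alpha \in \mathfrak{J}(EndA)$ annihilates every basis element $e_i$. For each $i \in I$, let $\pi_i\colon A \to \mathbb{Z}e_i$ denote the coordinate projection and $\iota_i\colon\mathbb{Z}e_i \hookrightarrow A$ the canonical inclusion. For every finite subset $F \subseteq I$, the element $e_F = \sum_{i \in F} \iota_i \pi_i$ is an idempotent of $EndA$, and a direct verification identifies the corner ring $e_F(EndA)e_F$ with $End\bigl(\bigoplus_{i \in F}\mathbb{Z}e_i\bigr) \cong M_{|F|}(\mathbb{Z})$.

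I would then invoke two standard facts: first, for any idempotent $e$ in any ring $R$ one has $e\,\mathfrak{J}(R)\,e \subseteq \mathfrak{J}(eRe)$; second, since $\mathfrak{J}(\mathbb{Z}) = 0$, also $\mathfrak{J}(M_n(\mathbb{Z})) = M_n(\mathfrak{J}(\mathbb{Z})) = 0$. Combining these, $e_F \alpha e_F = 0$ for every finite $F \subseteq I$. Taking $F = \{i,j\}$ for arbitrary $i,j \in I$ specializes this to $\pi_j \alpha \iota_i = 0$ as an element of $Hom(\mathbb{Z}e_i,\mathbb{Z}e_j) \cong \mathbb{Z}$.

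To conclude, fix any $i \in I$. The element $\alpha(e_i) \in A$ has finite support in the chosen basis, so
\[
\alpha(e_i) = \sum_{j \in I} \pi_j\bigl(\alpha(e_i)\bigr) = \sum_{j \in I} (\pi_j \alpha \iota_i)(e_i) = 0.
\]
Since $\alpha$ vanishes on every generator of $A$, one gets $\alpha = 0$, and hence $\mathfrak{J}(EndA) = 0$.

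I do not anticipate a genuine obstacle: the argument just combines the semiprimitivity of $\mathbb{Z}$ with the behaviour of the Jacobson radical under corner rings. The one mild subtlety is that when $I$ is infinite the formal sum $\sum_{i \in I}\iota_i \pi_i$ does not represent the identity of $EndA$ algebraically, so the reduction must be routed through \emph{finite} corners rather than a single global matrix decomposition.
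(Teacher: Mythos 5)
Your argument is correct, and it runs on the same engine as the paper's proof (corner idempotents attached to a basis, plus semiprimitivity of $\mathbb{Z}$), but the execution is a genuinely different, and arguably cleaner, variant. The paper only uses the rank-one corners $e_\alpha(EndA)e_\alpha\cong\mathbb{Z}$, so knowing $e_\alpha\mathfrak{J}e_\alpha=0$ is not yet enough; it then exploits that $\mathfrak{J}$ is a two-sided ideal to upgrade this to $e_\alpha\mathfrak{J}=0$ (by composing with an arbitrary $g$ sending $a_\alpha$ to an arbitrary element $b$), and finally derives a contradiction from a hypothetical $0\neq j\in\mathfrak{J}$ by composing with an auxiliary map $h$ that transports a nonzero coordinate of $j(a_\beta)$ into the $\alpha$-slot. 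You instead pass to finite corners $e_F(EndA)e_F\cong M_{|F|}(\mathbb{Z})$ and use the standard facts $e\mathfrak{J}(R)e\subseteq\mathfrak{J}(eRe)$ and $\mathfrak{J}(M_n(\mathbb{Z}))=M_n(\mathfrak{J}(\mathbb{Z}))=0$; with $F=\{i,j\}$ this kills every ``matrix entry'' $\pi_j\alpha\iota_i$ at once, so $\alpha$ vanishes on each basis vector by the finite-support decomposition, with no contradiction argument and no auxiliary endomorphisms needed. The trade-off is that you invoke the matrix-ring radical formula, whereas the paper gets by with $\mathfrak{J}(\mathbb{Z})=0$ alone at the cost of the extra ideal-theoretic maneuvering; your closing remark about routing everything through finite corners (since $\sum_i\iota_i\pi_i$ is not an algebraic identity when $I$ is infinite) correctly handles the only real subtlety.
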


\begin{proof} Put $\mathfrak{J}=\mathfrak{J}(EndA)$.
Assume on the contrary that $\mathfrak{J}\neq 0$. Let  $\{a_\alpha\}_{\alpha\in\Omega}$ be
a basis of $A$ and let $e_\alpha\in EndA$ such that
\[
e_{\alpha}(a_\beta)=\delta_{\alpha\beta}a_\alpha, \qquad   (\beta\in\Omega)
\]
where $\delta_{\alpha\beta}$ is the Kronecker $\delta$ function.

Clearly  $e_\alpha(EndA)e_\alpha\cong \mathbb{Z}$ and $e_\alpha(\mathfrak{J})e_\alpha=0$.

Let $b\in A$ and    $g\in EndA$ such that  $g(a_\alpha) =b$ for a fix $\alpha\in\Omega$.  Thus
\[
e_\alpha \mathfrak{J} ge_\alpha(a)=e_\alpha \mathfrak{J}(b)=0,
\]
hence $e_\alpha \mathfrak{J}=0$. Let $0\neq j\in\mathfrak{J}$, $\beta\in EndA$
and $h\in EndA$, such that
$j(a_\beta)=k_1a_{\alpha_1}+\cdots$,
where  $k_1\neq 0$,  $h(a_{\alpha_1})=a_\alpha$ and $h(a_\gamma)=0$ for all $\gamma\neq\alpha_1$. Then
$e_\alpha hj(a_\beta)=k_1a_\alpha\neq 0$, a contradiction.\end{proof}

\begin{lemma}\label{L:6}
Let  $M={}_RM_S$ be  an $(R,S)$-bimodule, where $R$ and  $S$ are rings. Put
$U=\left(
\begin{smallmatrix}
   R\mathfrak{}  &  M \\
   0   & S \\
\end{smallmatrix}
\right)
$ and
$W_U=\left(\begin{smallmatrix}
    \mathfrak{ J}(R)  & M \\
    0  & \mathfrak{J}(S) \\
\end{smallmatrix} \right)$.
Then
$\mathfrak{J}(U)=W_U$.
\end{lemma}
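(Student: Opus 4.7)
The plan is to establish the two inclusions $\mathfrak{J}(U) \subseteq W_U$ and $W_U \subseteq \mathfrak{J}(U)$ separately, exploiting the natural projection onto the diagonal together with a direct quasi-regularity computation.

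\textbf{The inclusion $\mathfrak{J}(U) \subseteq W_U$.} First I would observe that the map
$\pi\colon U \to R \times S$, $\bigl(\begin{smallmatrix} r & m \\ 0 & s \end{smallmatrix}\bigr) \mapsto (r,s)$, is a surjective ring homomorphism whose kernel consists of the strictly upper-triangular matrices with entry in $M$. Recall that for any surjective ring homomorphism $\varphi\colon U\to V$ one has $\varphi(\mathfrak{J}(U))\subseteq \mathfrak{J}(V)$: given $x\in \mathfrak{J}(U)$ and $v\in V$, lift $v$ to some $u\in U$; then $xu$ is quasi-regular in $U$, so $1-\varphi(x)v=\varphi(1-xu)$ is left invertible in $V$, showing $\varphi(x)\in \mathfrak{J}(V)$. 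Applying this to $\pi$ and using $\mathfrak{J}(R\times S)=\mathfrak{J}(R)\times \mathfrak{J}(S)$, I conclude that the diagonal entries of any element of $\mathfrak{J}(U)$ lie in $\mathfrak{J}(R)$ and $\mathfrak{J}(S)$ respectively, which is exactly $\mathfrak{J}(U)\subseteq W_U$.

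\textbf{The inclusion $W_U \subseteq \mathfrak{J}(U)$.} A short block-multiplication check shows that $W_U$ is a two-sided ideal of $U$ (absorption on the left uses $\mathfrak{J}(R)R\subseteq \mathfrak{J}(R)$ and $\mathfrak{J}(S)S\subseteq \mathfrak{J}(S)$; on the right one similarly uses $R\,\mathfrak{J}(R)\subseteq \mathfrak{J}(R)$ and that the $(1,2)$-entry always lands in $M$). To prove it is quasi-regular, let $w=\bigl(\begin{smallmatrix} a & m \\ 0 & b \end{smallmatrix}\bigr)\in W_U$. Since $a\in \mathfrak{J}(R)$ and $b\in \mathfrak{J}(S)$, the elements $1-a$ and $1-b$ are units in $R$ and $S$, and then
\[
\bigl(1-w\bigr)^{-1}=\left(\begin{smallmatrix} (1-a)^{-1} & (1-a)^{-1}m(1-b)^{-1} \\ 0 & (1-b)^{-1}\end{smallmatrix}\right)
\]
shows that $1-w$ is invertible in $U$. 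Because $W_U$ is a two-sided ideal, the same applies to $wu$ for every $u\in U$, so every $w\in W_U$ is quasi-regular, whence $W_U\subseteq \mathfrak{J}(U)$.

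The argument is completely structural and I do not expect a real obstacle. The only point requiring a line of justification is the fact that the image of the Jacobson radical under a surjection sits inside the Jacobson radical of the target; once this is recorded, the rest is an explicit inversion of an upper-triangular $2\times 2$ matrix together with the standard equivalence between membership in $\mathfrak{J}$ and quasi-regularity inside a two-sided ideal. This lemma is evidently being set up for use in block-triangular decompositions of endomorphism rings in the sequel, paralleling the inductive matrix-ring argument already employed in the proof of Lemma~\ref{L:4}.
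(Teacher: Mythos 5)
Your proof is correct and follows essentially the same route as the paper: one inclusion via a homomorphism onto the diagonal product (the paper's $\Gamma$ onto $R/\mathfrak{J}(R)\times S/\mathfrak{J}(S)$ is just your $\pi$ composed with the quotient maps), and the other by showing $W_U$ is a quasi-regular two-sided ideal, which you make explicit with the upper-triangular inverse. The only difference is that you spell out details the paper leaves as "clearly," so nothing further is needed.
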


\begin{proof}
Clearly $W_U$ is a quasiregular ideal of the matrix ring
$U$,
hence is contained in $\mathfrak{J}(U)$. The map
\[
\Gamma: \left(\begin{smallmatrix}
   r  &  m \\
   0   & s \\
\end{smallmatrix} \right)
\mapsto  \big(r+\mathfrak{J}(R),\;  s+\mathfrak{J}(S)\big)
\]
is a ring homomorphism  from $U$ to $(R/\mathfrak{J}(R))\times (S/\mathfrak{J}(S))$
with the kernel  $\mathfrak{Ker}(\Gamma)=W_U$. This implies that $W_U\subset \mathfrak{J}(U)$.
\end{proof}
\begin{lemma}\label{L:7}
Let $A=B\oplus C$, where $B$ is a free group and $C$ is a torsion group. Then
\begin{itemize}
\item[(i)] $EndA\cong_{top}
    \left(\begin{smallmatrix}
    EndA &  0 \\
             Hom(B,C)  &  EndC \\
   \end{smallmatrix} \right)$,
where $EndA$, $Hom(B,C)$, and $EndC$ are endowed with the finite topology;
\item[(ii)] $\mathfrak{J}
   \left(\begin{smallmatrix}
    EndB  &  0 \\
             Hom(B,C)   & EndC \\
   \end{smallmatrix} \right)=
   \left(\begin{smallmatrix}
    0  &  0 \\
             Hom(B,C)   &  \mathfrak{J}(EndC) \\
   \end{smallmatrix} \right)$;
\item[(iii)]  $\mathfrak{J}(EndA)$ is closed if and only if $\mathfrak{J}(EndC)$ is closed.
\end{itemize}
\end{lemma}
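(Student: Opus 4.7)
The plan is to work out all three parts uniformly from the block decomposition of $EndA$ induced by $A=B\oplus C$, then read off the topological statement for (iii) from the product structure of the finite topology.

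For part (i), I would start from the observation that an endomorphism $\alpha\in EndA$ is determined by its four components $\alpha_{XY}\in Hom(X,Y)$ for $X,Y\in\{B,C\}$. Since $B$ is free and $C$ is torsion, $Hom(C,B)=0$, so the matrix is lower triangular with entries in $EndB$, $Hom(B,C)$, $EndC$. The ring isomorphism is matrix multiplication, which is routine. For the topological claim, I would note that the finite topology on $EndA$ has as a base of neighborhoods of $0$ the sets $T(K)$ for $K$ a finite subset of $A$. Since every finite $K\subset A$ lies in $K_B\oplus K_C$ for finite sets $K_B\subset B$ and $K_C\subset C$, checking $\alpha K=0$ reduces to checking each component separately on $K_B$ and on $K_C$. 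This gives a neighborhood base on $EndA$ that matches the product of the finite-topology neighborhood bases on $EndB$, $Hom(B,C)$, and $EndC$, which is exactly the topology on the matrix ring.

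For part (ii), I would apply Lemma \ref{L:6} with $R=EndB$, $S=EndC$, $M=Hom(B,C)$ (the statement is symmetric under transposing the matrix, so the lower-triangular variant yields the corresponding Jacobson radical with $M$ in the lower-left and $\mathfrak{J}(R)$, $\mathfrak{J}(S)$ on the diagonal). This gives
\[
\mathfrak{J}\bigl(EndA\bigr)\cong\left(\begin{smallmatrix} \mathfrak{J}(EndB) & 0 \\ Hom(B,C) & \mathfrak{J}(EndC)\end{smallmatrix}\right).
\]
Then Lemma \ref{L:5} yields $\mathfrak{J}(EndB)=0$, since $B$ is free, and the stated form follows.

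For part (iii), part (i) identifies $EndA$ topologically with $EndB\times Hom(B,C)\times EndC$ (each factor carrying its finite topology), and part (ii) identifies $\mathfrak{J}(EndA)$ with $\{0\}\times Hom(B,C)\times \mathfrak{J}(EndC)$. Since $\{0\}$ is closed in the Hausdorff space $EndB$ and $Hom(B,C)$ is closed in itself, this product subset is closed if and only if $\mathfrak{J}(EndC)$ is closed in $EndC$; conversely, intersecting with the coordinate slice $\{0\}\times\{0\}\times EndC$ shows that if $\mathfrak{J}(EndA)$ is closed then so is $\mathfrak{J}(EndC)$. The main subtlety I expect is not conceptual but bookkeeping: verifying carefully in (i) that the finite topology on $EndA$ really coincides with the product of finite topologies on the three corner modules, since this equivalence of topologies is what allows the closedness question to decouple into the three coordinates in part (iii).
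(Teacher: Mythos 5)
Your proposal is correct and follows essentially the same route as the paper: part (ii) via Lemma \ref{L:6} (in its transposed, lower-triangular form) together with $\mathfrak{J}(EndB)=0$ from Lemma \ref{L:5}, and part (iii) by reading off closedness coordinatewise, since $Hom(B,C)$ is closed. The only difference is that for (i) the paper simply cites Lemma 1 of \cite{Ursul_Juras}, whereas you verify directly that $Hom(C,B)=0$ and that the finite topology on $EndA$ agrees with the product of the finite topologies on the three corners; your verification is sound (and implicitly corrects the typo $EndA$ for $EndB$ in the upper-left entry of the matrix in (i)).
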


\begin{proof}
(i) Follows from  Lemma 1  in \cite{Ursul_Juras}.
(ii) Follows from Lemma \ref{L:6}.
(iii) Since $Hom(B,C)$ is always closed, the assertion follows from (ii).
\end{proof}

\begin{proof}[Proof of Theorem \ref{T:3}]
Let $A=B\oplus C$, where $B$ is a free group and $C$ is a torsion group. The proof
follows from Lemmas \ref{L:4}, \ref{L:5} and \ref{L:7}.
\end{proof}

\section{The Liebert topology on endomorphism rings of $p$-groups}

In the sequel  $A$ denotes an abelian $p$-group. Denote  $A_n =A[p^n]$ for each $n\in\mathbb{N}$ and
$T(A_n)=\{ \alpha\in End(A)\mid \alpha A_n=0\}$. Thus the family
$\{T(A_n)\}$
is a fundamental system of neighborhoods of zero of a Hausdorff ring topology $\mathcal{T}_L$ on $EndA$.
This topology is called the \emph{Liebert topology}.
We note that each $T(A_n)$ is an ideal of $EndA$.

We prove now some properties of the Liebert topology $\mathcal{T}_L$ on $EndA$.
\begin{proposition}\label{P:1}
The ring $(EndA,\mathcal{T}_L)$ is a complete 1st countable topological ring.
\end{proposition}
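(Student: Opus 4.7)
The plan is to verify the two assertions separately, and both reduce to unpacking the definition of $\mathcal{T}_L$ together with the fact that $A=\bigcup_{n\in\mathbb{N}}A_n$, which holds because $A$ is a $p$-group.

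First countability is essentially free: by definition the family $\{T(A_n)\}_{n\in\mathbb{N}}$ is a fundamental system of neighborhoods of zero, so it is a countable base at zero for the translation-invariant topology $\mathcal{T}_L$, which is precisely first countability at every point.

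For completeness, since the topology is first countable it suffices to show that every Cauchy sequence in $(EndA,\mathcal{T}_L)$ converges. Take such a sequence $\{\alpha_k\}_{k\in\mathbb{N}}$. The Cauchy condition gives, for each $n$, an index $K_n$ such that $\alpha_j-\alpha_k\in T(A_n)$ whenever $j,k\geq K_n$; i.e.\ the restrictions $\alpha_k|_{A_n}$ are eventually constant in $k$. Using that $A=\bigcup_{n\in\mathbb{N}}A_n$, define a map $\alpha:A\to A$ by choosing, for each $a\in A$, any $n$ with $a\in A_n$ and putting $\alpha(a):=\alpha_{K_n}(a)$. The inclusion $A_n\subset A_{n+1}$ and a trivial monotonicity adjustment $K_n\leq K_{n+1}$ make this well-defined, and $\alpha$ is additive because, for fixed $a,b\in A$, both $a,b,a+b$ lie in some common $A_n$, where $\alpha$ agrees with the honest endomorphism $\alpha_{K_n}$.

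Finally, convergence $\alpha_k\to\alpha$ in $\mathcal{T}_L$ follows by construction: for any $n$ and any $k\geq K_n$, the restrictions $\alpha_k|_{A_n}$ and $\alpha|_{A_n}$ coincide, so $\alpha-\alpha_k\in T(A_n)$. I do not expect a real obstacle here; the only point requiring a moment of care is ensuring that the pointwise-defined $\alpha$ is a single endomorphism rather than merely a compatible system of maps on the $A_n$, but this is immediate from the fact that the chain $\{A_n\}$ exhausts $A$ and that each $\alpha_k$ is itself an endomorphism.
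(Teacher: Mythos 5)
Your proof is correct, but it takes a different route from the paper. The paper's argument is a two-line reduction: each $T(A_n)=\bigcap_{a\in A_n}T(a)$ is an intersection of finite-topology-open (hence closed) subgroups, so $\mathcal{T}_L$ is a finer group topology than $\mathcal{T}_{fin}$ admitting a base of neighborhoods of zero that are $\mathcal{T}_{fin}$-closed; since $(EndA,\mathcal{T}_{fin})$ is complete, the standard comparison criterion (a finer group topology with a base at zero of sets closed in a coarser complete topology is itself complete) gives completeness of $\mathcal{T}_L$. You instead construct the limit directly: using $A=\bigcup_n A_n$ (with each $A_n=A[p^n]$ a subgroup of the increasing chain) you glue the eventually constant restrictions $\alpha_k|_{A_n}$ into a single endomorphism and verify convergence; this is self-contained and does not invoke completeness of the finite topology at all. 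The price is your opening reduction ``first countable, so Cauchy sequences suffice'': that step is fine for the abelian Hausdorff group $(EndA,+)$ (a decreasing countable base $\{T(A_n)\}$ lets one extract from any Cauchy filter a Cauchy sequence whose limit the filter converges to, or one can quote metrizability via Birkhoff--Kakutani), but it is a genuine ingredient and should be stated or justified rather than asserted, since ``complete'' here means convergence of Cauchy filters, not just sequences. With that remark added, your argument is a valid and arguably more elementary alternative; it also makes visible why $(EndA,\mathcal{T}_L)$ is the inverse limit of the discrete rings $EndA/T(A_n)$, which is the paper's Corollary 2.
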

\begin{proof}
Indeed, $T(A_n)=\cap_{a\in A_n}T(a)$, hence $T(A_n)$ is closed in the finite topology.
Since End$A$ endowed with the finite topology is complete, it is complete with respect
to the Liebert topology.
\end{proof}

\begin{corollary}\label{C:2}
For any $p$-group $A$ the ring  $(EndA,\mathcal{T}_L)$ is the inverse limit of discrete rings $EndA/T(A_n)$.
\end{corollary}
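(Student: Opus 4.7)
The plan is to exhibit the canonical map into the inverse limit as a topological ring isomorphism, using the completeness and first countability furnished by Proposition \ref{P:1}.

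First, note that $A_n\subset A_{n+1}$, so $T(A_{n+1})\subset T(A_n)$, and the natural projections $\pi_{n,n+1}:EndA/T(A_{n+1})\to EndA/T(A_n)$ turn $\{EndA/T(A_n)\}_{n\in\mathbb{N}}$ into an inverse system of discrete rings (each $T(A_n)$ being a two-sided ideal). Let
\[
\Phi:EndA\longrightarrow \varprojlim_{n} EndA/T(A_n),\qquad \Phi(\alpha)=\big(\alpha+T(A_n)\big)_{n\in\mathbb{N}}.
\]
It is immediate that $\Phi$ is a ring homomorphism. Since the family $\{T(A_n)\}$ is a neighborhood basis of zero for $\mathcal{T}_L$, $\Phi$ is continuous; moreover $\Phi$ is an open map onto its image, because the preimages of the basic open neighborhoods of zero in the inverse limit (kernels of the projections to the $n$-th factor) are precisely the $T(A_n)$.

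To see that $\Phi$ is injective, observe that $\mathcal{T}_L$ is Hausdorff, so $\bigcap_n T(A_n)=0$; hence $\Phi(\alpha)=0$ forces $\alpha=0$. The main step is surjectivity, and this is where I would use completeness. Given a coherent sequence $(\alpha_n+T(A_n))_n$ in the inverse limit, choose representatives $\alpha_n\in EndA$ with $\alpha_{n+1}-\alpha_n\in T(A_n)$ for every $n$. Then $(\alpha_n)$ is a Cauchy sequence in $(EndA,\mathcal{T}_L)$: for any $N$ and $m\geq n\geq N$,
\[
\alpha_m-\alpha_n=\sum_{k=n}^{m-1}(\alpha_{k+1}-\alpha_k)\in T(A_n)\subset T(A_N).
\]
By Proposition \ref{P:1}, $(EndA,\mathcal{T}_L)$ is complete and first countable, so the Cauchy sequence converges to some $\alpha\in EndA$. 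Passing to the limit in $\alpha_m-\alpha_n\in T(A_n)$ (which is closed, being an intersection of kernels of evaluation maps), we get $\alpha-\alpha_n\in T(A_n)$ for every $n$, i.e.\ $\Phi(\alpha)=(\alpha_n+T(A_n))_n$.

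The potentially subtle point is the choice of representatives $\alpha_n$ satisfying $\alpha_{n+1}-\alpha_n\in T(A_n)$, but this is automatic from coherence of the sequence in the inverse limit. Combining injectivity, surjectivity, continuity, and openness, $\Phi$ is an isomorphism of topological rings, which gives the desired identification $(EndA,\mathcal{T}_L)\cong \varprojlim_n EndA/T(A_n)$.
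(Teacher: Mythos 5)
Your proof is correct and follows exactly the route the paper intends: the corollary is stated as an immediate consequence of Proposition \ref{P:1} (completeness plus the countable base of open ideals $T(A_n)$), and your argument simply writes out that standard identification, the only cosmetic remark being that closedness of each $T(A_n)$ is most quickly seen from the fact that an open subgroup of a topological group is closed. No gaps; this matches the paper's (implicit) proof.
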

\begin{proposition}\label{P:2}(see \cite{Pierce} or  \cite{Fuchs}, Vol. II,   p. 224,  Exercise 8)
For any $p$-group the Jacobson radical of $(EndA,\mathcal{T}_L)$ is closed.
\end{proposition}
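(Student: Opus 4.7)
My plan is to use the standard characterization of the Jacobson radical: $\alpha \in \mathfrak{J}(EndA)$ iff $1 - \beta\alpha$ is a unit in $EndA$ for every $\beta \in EndA$. So I would fix $\alpha \in \overline{\mathfrak{J}(EndA)}$ and an arbitrary $\beta \in EndA$, and aim to show that $1 - \beta\alpha$ is an automorphism of $A$.

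The key idea is to exploit two features of the Liebert topology in tandem: first, that the approximating neighborhoods are defined by the characteristic subgroups $A_n = A[p^n]$, and second, that $A = \bigcup_n A_n$ because $A$ is a $p$-group. Specifically, for each $n$, the hypothesis $\alpha \in \overline{\mathfrak{J}(EndA)}$ gives some $\gamma_n \in \mathfrak{J}(EndA)$ with $\alpha - \gamma_n \in T(A_n)$, i.e., $\gamma_n$ and $\alpha$ agree on $A_n$. Since $\gamma_n \in \mathfrak{J}(EndA)$, the endomorphism $1 - \beta\gamma_n$ is an automorphism of $A$. Because $A_n$ is a characteristic subgroup, it is preserved both by $1 - \beta\gamma_n$ and by its inverse, so $1 - \beta\gamma_n$ restricts to a bijection $A_n \to A_n$. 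Since $\gamma_n|_{A_n} = \alpha|_{A_n}$, the same is true for $(1 - \beta\alpha)|_{A_n}$.

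Now, varying $n$ and using $A = \bigcup_n A_n$, injectivity and surjectivity of $1 - \beta\alpha$ on $A$ follow immediately: any element of $A$ lives in some $A_n$, and on each $A_n$ we already have a bijection. Thus $1 - \beta\alpha$ is a bijective endomorphism of $A$, hence a unit in $EndA$ (its set-theoretic inverse is automatically a group homomorphism). As $\beta$ was arbitrary, $\alpha \in \mathfrak{J}(EndA)$.

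I expect no real obstacle: the argument is short and the content is entirely in the two observations that $A_n$ is characteristic (so automorphisms of $A$ restrict to automorphisms of $A_n$, which is where the Liebert topology lets us approximate) and that $A$ is exhausted by the $A_n$. The only small subtlety is verifying that $1 - \beta\gamma_n$ is surjective on $A_n$ and not merely injective there; this follows by applying the characteristic-subgroup observation to $(1 - \beta\gamma_n)^{-1}$, which is itself an automorphism. The argument also shows, en passant, why the analogous statement fails for the finite topology: one can no longer arrange that the approximating neighborhood be a two-sided ideal cutting out a characteristic subgroup, which is precisely what makes the Liebert topology better behaved.
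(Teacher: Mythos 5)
Your argument is correct, but it is not the route the paper takes: the paper disposes of this proposition in one line, combining Proposition~\ref{P:1} (that $(EndA,\mathcal{T}_L)$ is a complete, first countable topological ring whose basic neighborhoods of zero $T(A_n)$ are two-sided ideals, so that $EndA$ is an inverse limit of the discrete rings $EndA/T(A_n)$) with a theorem of Kaplansky asserting that the Jacobson radical of such a complete linearly topologized ring is closed. You instead verify quasi-regularity directly via the unit criterion: for $\alpha$ in the closure and arbitrary $\beta$, choose $\gamma_n\in\mathfrak{J}(EndA)$ agreeing with $\alpha$ on $A[p^n]$; the automorphism $1-\beta\gamma_n$ and its inverse both preserve the fully invariant subgroup $A[p^n]$, hence restrict to a bijection of it, and this restriction coincides with that of $1-\beta\alpha$; since $A=\bigcup_n A[p^n]$, the map $1-\beta\alpha$ is bijective on $A$, hence a unit of $EndA$, so $\alpha\in\mathfrak{J}(EndA)$. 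The trade-off is clear: the paper's citation is shorter and isolates the structural reason (completeness plus an ideal neighborhood base), and Kaplansky's theorem applies to any ring of that kind; your proof is self-contained and elementary, uses neither completeness nor first countability --- only that the $T(A_n)$ annihilate fully invariant subgroups exhausting $A$ --- and it pinpoints exactly what fails for the finite topology, whose neighborhoods $T(K)$ are merely left ideals cutting out finite subgroups that are not fully invariant (consistent with Lemma~\ref{L:2}, where $\mathfrak{J}(EndA)$ is not closed in $\mathcal{T}_{fin}$). Either proof is valid; yours could replace the citation verbatim.
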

\begin{proof}
Follows from Proposition \ref{P:1} and Corollary on  page 46 in \cite{Kaplansky_3}. \end{proof}

\begin{proposition}\label{P:3}
Let $A$ be an abelian $p$-group. Then $(EndA,\mathcal{T}_L)$ is discrete if and only if $A$ is a bounded group.
\end{proposition}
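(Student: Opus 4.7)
The plan is to unpack what discreteness means for $\mathcal{T}_L$: since $\{T(A_n)\}_{n\in\mathbb{N}}$ is a fundamental system of neighborhoods of zero, the topology is discrete precisely when $T(A_n)=\{0\}$ for some $n$. So the claim reduces to proving that some $T(A_n)$ is trivial if and only if $A$ is bounded.

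For the easy direction, assume $A$ is bounded, and pick $n$ with $p^nA=0$. Then every element of $A$ has order dividing $p^n$, so $A_n=A[p^n]=A$, which gives $T(A_n)=T(A)=\{0\}$, hence $\mathcal{T}_L$ is discrete.

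For the converse I would exploit the observation that the endomorphism $p^n\cdot\mathrm{id}_A$ is automatically an element of $T(A_n)$: for any $a\in A_n$ we have $p^na=0$ by definition, so $(p^n\cdot\mathrm{id}_A)(a)=0$. Now suppose $\mathcal{T}_L$ is discrete, choose $n$ with $T(A_n)=\{0\}$, and combine this with the observation above to conclude $p^n\cdot\mathrm{id}_A=0$, i.e.\ $p^nA=0$, so $A$ is bounded.

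The argument is entirely formal and the main (very minor) obstacle is simply noticing the canonical witness $p^n\cdot\mathrm{id}_A\in T(A_n)$; once this is observed the proof is two lines in each direction.
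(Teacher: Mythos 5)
Your proof is correct, and it is genuinely simpler than the one in the paper. Both arguments reduce discreteness to the vanishing of some basic neighborhood $T(A_n)$, and your forward direction (bounded $\Rightarrow$ discrete, via $A_n=A$) is the same immediate observation. The difference is in the converse: you exhibit the canonical element $p^n\cdot\mathrm{id}_A\in T(A_n)$, so $T(A_n)=0$ forces $p^nA=0$ in one line, with no structure theory at all. The paper instead argues that $A$ must be reduced, invokes the theory of basic subgroups to extract from an unbounded reduced $p$-group a cyclic direct summand $\mathbb{Z}(p^k)$ with $k>n$, and then builds a nonzero endomorphism in $T(A_n)$ supported on that summand (essentially multiplication by a power of $p$ followed by the projection). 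The paper's route does yield slightly more information --- it produces nonzero elements of $T(A_n)$ with finite image, localized on a cyclic summand --- but for the proposition as stated this extra structure is not needed, and your argument avoids both the ``obviously $A$ is reduced'' step (which itself requires a small argument about divisible summands) and the appeal to basic subgroups. In short: correct, complete in both directions, and more elementary than the published proof.
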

\begin{proof}
There exists $n\in \mathbb{N}$ such that $T(A_n)=0$. Obviously, $A$ is a reduced group. If $A$ is unbounded, then
$A$ contains a subgroup isomorphic to $\mathbb{Z}(p^k)$ as a direct summand, so  $A=\mathbb{Z}(p^k)\oplus B$, where
$k>n$. Set $\alpha\in EndA$, $\alpha\in T(B)$, $\alpha(x)=p^{k-n}\mathbb{Z}(p^k)$. Thus $\alpha\neq 0$ and
$\alpha\in T(A_n)$, a contradiction.\end{proof}

Recall (see \cite{Abrudan}), that   a ring topology $\mathcal{T}$ on a ring $EndA$
is called \emph{admissible} if $\mathcal{T}\ge \mathcal{T}_{fin}$.

\begin{proposition}\label{P:4}
The Liebert topology on $EndA$ is admissible.
\end{proposition}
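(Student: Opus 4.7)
The plan is to verify the inequality $\mathcal{T}_L \geq \mathcal{T}_{\mathrm{fin}}$ directly on the given bases of neighborhoods of zero. Concretely, I would show that for every finite subset $K\subset A$ there exists $n\in\mathbb N$ with $T(A_n)\subset T(K)$; by the definitions recalled in the paper (the family $\{T(K)\}_{K\text{ finite}}$ is a basis for $\mathcal{T}_{\mathrm{fin}}$ and $\{T(A_n)\}_{n\in\mathbb N}$ is a basis for $\mathcal{T}_L$), this is exactly the statement that $\mathcal{T}_L$ is admissible.

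First I would fix an arbitrary finite subset $K=\{a_1,\dots,a_r\}$ of $A$. Since $A$ is an abelian $p$-group, each $a_i$ has order a power of $p$, say $p^{k_i}$. Set
\[
n=\max\{k_1,\dots,k_r\}.
\]
Then $p^n a_i = 0$ for every $i$, so $K\subset A[p^n]=A_n$. Consequently, any $\alpha\in EndA$ with $\alpha A_n=0$ automatically satisfies $\alpha K=0$, which gives the inclusion $T(A_n)\subset T(K)$.

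Since $K$ was arbitrary, every basic neighborhood of zero of $\mathcal{T}_{\mathrm{fin}}$ is a neighborhood of zero in $\mathcal{T}_L$, so $\mathcal{T}_L\geq\mathcal{T}_{\mathrm{fin}}$, which is precisely the definition of admissibility from \cite{Abrudan}. There is essentially no obstacle here: the argument amounts to the observation that finite subsets of a $p$-group are contained in some layer $A[p^n]$, so the Liebert filtration is automatically cofinal in the finite-topology filtration when restricted to these finite sets.
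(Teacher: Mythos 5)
Your argument is correct and is essentially identical to the paper's proof: both take a finite set $K$, choose $n$ with $p^nK=0$ so that $K\subset A_n=A[p^n]$, and conclude $T(A_n)\subset T(K)$, hence $\mathcal{T}_L\geq\mathcal{T}_{\mathrm{fin}}$. No gaps and nothing further to add.
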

\begin{proof}
We have to show that the Liebert topology $\mathcal{T}_L$ is stronger than the finite topology $\mathcal{T}_{fin}$.
Let  $K$ be a finite subset of $A$. There exists $n\in\mathbb{N}$ such that $p^nK=0$. Then $K\subset A_n$, hence
$T(K)\supset T(A_n)$.\end{proof}

The following question arises naturally.

\begin{question}
Classify abelian $p$-groups whose endomorphism rings are locally compact with respect to the Liebert topology.
\end{question}

\bigskip
\begin{theorem}\label{T:4}
Let $A$ be an abelian $p$-group. The following statements  are equivalent:
\begin{itemize}
\item[(i)] $(EndA,\mathcal{T}_L)$ is compact;

\item[(ii)] $(EndA,\mathcal{T}_L)$ is locally compact;

\item[(iii)] $(EndA,\mathcal{T}_{fin})$ is compact;

\item[(iv)] $A$ is a  direct sums of finitely many  cocyclic groups.

\end{itemize}
\end{theorem}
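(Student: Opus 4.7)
The plan is to establish the cycle (iv)$\Rightarrow$(i)$\Rightarrow$(iii)$\Rightarrow$(iv) along with the trivial (i)$\Rightarrow$(ii), and then close the loop by (ii)$\Rightarrow$(iv). For (iv)$\Rightarrow$(i), write $A=C_1\oplus\cdots\oplus C_k$ with each $C_i$ cocyclic; then $\dim_{\mathbb{F}_p}A[p]=k$, and multiplication by $p^{n-1}$ gives an injection $A_n/A_{n-1}\hookrightarrow A[p]$, so each $A_n$ is finite. Restriction to $A_n$ embeds $EndA/T(A_n)$ into the finite ring $End(A_n)$, hence each quotient is finite. By Proposition~\ref{P:1} and the corollary following it, $(EndA,\mathcal{T}_L)$ is the profinite inverse limit of these discrete finite rings, hence compact. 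The trivial (i)$\Rightarrow$(ii) and the implication (i)$\Rightarrow$(iii), which follows from Proposition~\ref{P:4} because $\mathcal{T}_L$ is finer than $\mathcal{T}_{fin}$ (so the identity to the coarser topology is continuous and carries compacts to compacts), complete the easy direction.

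For (iii)$\Rightarrow$(iv), each evaluation $\alpha\mapsto\alpha(a)$ is continuous into discrete $A$, so $(EndA)\cdot a$ is finite for every $a\in A$. If $A$ is not divisible, pick $a\in A$ of height $0$ and order $p^k$; a straightforward computation shows each $p^ia$ has height exactly $i$, so $\langle a\rangle\cong\mathbb{Z}(p^k)$ is pure and hence a direct summand $A=\langle a\rangle\oplus B$. The endomorphism sending $a\mapsto b$ and killing $B$ realizes every $b\in A[p^k]$ as $\alpha(a)$, so $(EndA)\cdot a=A[p^k]$, and finiteness forces $A[p]$ to be finite. If $A$ is divisible, $A=\bigoplus_I\mathbb{Z}(p^\infty)$ is injective, so any partial endomorphism $\langle a\rangle\to A$ extends; taking $a\neq 0$ in $A[p]$, the orbit equals $A[p]$ and finiteness forces $|I|<\infty$. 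In either case $A[p]$ is finite, and the structure theorem for $p$-groups with finite socle delivers~(iv).

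The delicate step is (ii)$\Rightarrow$(iv). Local compactness furnishes $n_0$ with $T(A_{n_0})$ compact, so $T(A_{n_0})/T(A_m)$ is finite for every $m\ge n_0$. Assuming $A[p]$ is infinite, we seek $n\ge n_0$ together with $a\in A$ of order $p^{n+1}$ such that every partial map $\langle a\rangle\to A$ extends to an endomorphism of $A$; then the endomorphisms $\alpha_b$ defined by $\alpha_b(a)=b$ (vanishing on the pure complement, or extending by injectivity in the divisible case) produce, as $b$ ranges over $A[p]$, distinct cosets of $T(A_{n+1})$ inside $T(A_n)\subset T(A_{n_0})$, so $|T(A_n)/T(A_{n+1})|\ge|A[p]|=\infty$, contradicting finiteness. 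For unbounded $A$ such an $a$ exists: either as a height-$0$ element of sufficiently large order (when $A$ is not divisible) or via injectivity (when $A$ is divisible). The main obstacle is the bounded case: by Proposition~\ref{P:3}, $\mathcal{T}_L$ is then discrete, so $T(A_{n_0})=\{0\}$ for large $n_0$ is only trivially compact and the preceding construction collapses; a separate direct argument exploiting the bounded exponent must then be used to extract~(iv) in this degenerate situation.
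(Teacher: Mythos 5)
Your cycle (iv)$\Rightarrow$(i)$\Rightarrow$(ii), (i)$\Rightarrow$(iii) is fine, and your direct proof of (iii)$\Rightarrow$(iv) via finiteness of the orbits $(EndA)a$ is a reasonable self-contained alternative to the paper, which simply cites Fuchs, Proposition 107.4 at this point; but it contains one slip: for an arbitrary height-zero $a$ of order $p^k$ it is \emph{not} true that each $p^ia$ has height exactly $i$ --- heights can jump. In $\mathbb{Z}(p)\oplus\mathbb{Z}(p^3)=\langle x\rangle\oplus\langle y\rangle$ the element $a=x+py$ has height $0$ while $pa=p^2y$ has height $2$, so $\langle a\rangle$ is not pure. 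The standard repair: if $A$ is not divisible, some $c\in A[p]$ has finite height $k$; writing $c=p^kb$, the subgroup $\langle b\rangle$ is pure and bounded, hence a direct summand by Kulikov's theorem. With that fix, (iii)$\Rightarrow$(iv) goes through.

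The genuine gap is exactly where you flag it, in (ii)$\Rightarrow$(iv), and it cannot be closed: the ``separate direct argument'' you defer for the bounded case does not exist, because the implication is false there. If $A$ is an infinite bounded $p$-group, say $A=\bigoplus_{\omega}\mathbb{Z}(p)$, then $\mathcal{T}_L$ is discrete by Proposition~\ref{P:3}, hence $(EndA,\mathcal{T}_L)$ is locally compact, while (i), (iii) and (iv) all fail. Moreover, your dichotomy for unbounded $A$ (``height-zero element of large order, or divisibility'') also breaks for $A=B\oplus D$ with $B$ infinite elementary and $D=\mathbb{Z}(p^\infty)$: this $A$ is unbounded and not divisible; height-zero elements of large order exist (take $(b,d)$ with $b\neq 0$), but they generate non-pure subgroups, and in fact for $n\geq 1$ one checks that every $\alpha\in T(A_n)$ kills $B\oplus D[p^n]$ and so factors through $A/A[p^n]\cong D$ with divisible image, giving $T(A_n)\cong Hom(D,D)\cong \mathbb{Z}_p$ with the $p$-adic topology under $\mathcal{T}_L$ --- a \emph{compact} neighborhood of zero, so (ii) holds while (iv) fails again. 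It is worth recording that the paper's own proof of (ii)$\Rightarrow$(iii) has precisely this hole: after splitting off the finite-rank divisible part it asserts that a basic subgroup $B'$ of the infinite reduced part $B$ is ``obviously unbounded,'' which fails exactly when $B$ is infinite and bounded (then $B'=B$). So you have correctly isolated the step that fails, but the right conclusion is that condition (ii) is strictly weaker than (i), (iii), (iv) --- your argument, suitably completed, shows it holds whenever $A$ is the direct sum of a bounded group and a divisible group of finite rank --- and the theorem as stated needs amending rather than a cleverer proof of the bounded case.
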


\begin{proof}[Proof of Theorem \ref{T:4}]

$\rm(ii)\Rightarrow \rm(iii)$: There exists $n\in\mathbb{N}$ such that $T(A_n)$ is compact in $(EndA,\mathcal{T}_L)$.
Since $\mathcal{T}_L$ is admissible by Proposition \ref{P:4},  $T(A_n)$ is compact in  $(EndA,\mathcal{T}_{fin})$.

We claim that the maximal divisible subgroup $D$ of $A$ is the direct sum of a finite number of copies of $\mathbb{Z}(p^\infty)$.
Assume on the contrary that $A=B\oplus C$, where $B=\mathbb{Z}(p^\infty)\oplus\cdots\oplus \mathbb{Z}(p^\infty)\oplus\cdots$.
Consider the shift homomorphism
\[
\alpha:B\to B, (x_1x_2x_3\ldots)\mapsto (0x_1x_2\cdots).
\]
 Extend $\alpha$ to an endomorphism
of $A$ setting $\alpha(C)=0$ and $\alpha_{\upharpoonright_B}=\alpha$. Then  $p^n\alpha^m\in T(A_n)$ for  $m\in\mathbb{N}$.
It is easy to see that  the elements $p^n\alpha^ma$  are different for all $m\in\mathbb{N}$,  where
$a=((\frac{1}{p^{n+1}}+\mathbb{Z})00\ldots)$  (here the group $\mathbb{Z}(p^\infty)$ is the subgroup of $\mathbb{R}/\mathbb{Z}$
generated by the subset $\{\frac{1}{p^n}+\mathbb{Z}\mid n\in\mathbb{N}\}$). Thus $T(A_n)$ will not be compact in the finite
topology, a contradiction.

Let $A=B\oplus C$,  where $C$ is a direct sum of a finite number of copies of $\mathbb{Z}(p^\infty)$ and $B$ is reduced.
We prove  that $B$ is finite. Assume by way of contradiction, that $B$ is infinite.
Let $B^\prime=\oplus_{i\ge 1}\Big(\bigoplus_{\mathfrak{m}_i}\mathbb{Z}(p^i)\Big)$ be a basic subgroup of $B$.
Obviously, $B^\prime$ is unbounded. Let $i_0\in\mathbb{N}$ such that $\mathfrak{m}_{i_0}\ge 1$ and $i_0\ge n+1$.
There exists a subgroup $K$ of $A$ such that $A=\mathbb{Z}(p^{i_0})\oplus K$.  Let $a$ be a generator of $\mathbb{Z}(p^{i_0})$.
If $i_1>i_0$ and $\mathfrak{m}_{i_1}\ge 1$, then there exists an embedding $q_{i_1}$ of $\mathbb{Z}(p^{i_0})$ in
$\mathbb{Z}(p^{i_1})\subset \bigoplus_{\mathfrak{m}_{i_1}}\mathbb{Z}(p^{i_1})$. Extend $q_{i_1}$ to an endomorphism
of $A$ in the obvious way and keep the same notation. Then $p^nq_{i_1}\in T(A_n)$ and the set
$\{p^nq_{i_1}(a)\mid i_1\ge 1, \;  \mathfrak{m}_{i_1}\ge 1\}$ is infinite, a contradiction.

$\rm(iii) \Rightarrow \rm(iv)$: Follows from \cite{Fuchs}, Proposition 107.4.

$\rm(iv)\Rightarrow \rm(i)$:  For every $n\in\mathbb N$ the subroup $A[p^n]$ is finite, hence $T(A_n)\in \mathcal T_{fin}$.
Thus $\mathcal T_L\leq\mathcal T_{fin}$ and by compactness of $\mathcal T_{fin}$ we obtain that $\mathcal T_L=\mathcal T_{fin}$.
\end{proof}

\begin{remark}\label{R:2} Let $R$ be a ring. If  $e$ is an  idempotent of  $R$, then
\[
Ann_l(e)=R(1-e)=\{x-ex\vert x\in R\}.
\]
\end{remark}

\begin{theorem}\label{T:5}
Let $A$ be  a countable $p$-elementary group. Then the ring $R=EndA$  has a nonadmissible topology.
\end{theorem}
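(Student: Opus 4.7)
The strategy is to endow $R=EndA$ with a ring topology whose basic neighborhoods of $0$ are the \emph{right annihilators of finite families of $\mathbb F_p$-linear functionals on $A$}, producing a Hausdorff ring topology that fails to refine the finite topology.

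Identify $A$ with $\bigoplus_{n\in\omega}\mathbb F_p e_n$ and let $A^{*}=\mathrm{Hom}(A,\mathbb F_p)$; since $A$ is an infinite-dimensional $\mathbb F_p$-vector space, $A^{*}$ separates points. For each finite $S\subset A^{*}$, I would put
\[
I_S:=\{f\in R:\psi\circ f=0\ \text{for every}\ \psi\in S\}=\{f\in R:f(A)\subset\textstyle\bigcap_{\psi\in S}\ker\psi\}.
\]
Each $I_S$ is a right ideal of $R$ and $I_S\cap I_{S'}=I_{S\cup S'}$, so $\{I_S\}$ is a filter base of subgroups of $R$. The next step is to show that this family is a neighborhood base of $0$ for a Hausdorff ring topology $\mathcal T$ on $R$. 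Continuity of right multiplication, and of ring multiplication at $(0,0)$, are immediate from the right-ideal property $I_S\cdot R\subset I_S$. Continuity of left multiplication by a fixed $x\in R$ reduces, given $I_{S'}$, to the choice $S:=\{\psi\circ x:\psi\in S'\}$ (still a finite subset of $A^{*}$): for $g\in I_S$ and $\psi\in S'$ one has $\psi(x(g(a)))=(\psi\circ x)(g(a))=0$ for all $a\in A$, so $xI_S\subset I_{S'}$. The Hausdorff condition $\bigcap_S I_S=0$ is clear because $A^{*}$ separates points of $A$.

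To establish non-admissibility, fix any $0\neq a\in A$. For every finite $S\subset A^{*}$ the subspace $W_S=\bigcap_{\psi\in S}\ker\psi$ has codimension at most $|S|$ in the infinite-dimensional $A$, hence is nonzero; choosing $0\neq w\in W_S$ and a complement $C$ of $\mathbb F_p a$ in $A$, the endomorphism $f$ defined by $f(a)=w$ and $f(C)=0$ lies in $I_S$ (its image is contained in $\mathbb F_p w\subset W_S$), yet $f(a)=w\neq 0$, so $f\notin T(\{a\})$. Thus no $I_S$ is contained in $T(\{a\})$, so the $\mathcal T_{fin}$-open set $T(\{a\})$ is not a $\mathcal T$-neighborhood of $0$; consequently $\mathcal T\not\geq\mathcal T_{fin}$, i.e.\ $\mathcal T$ is not admissible. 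The main technical step is the left-multiplication continuity, which rests crucially on the fact that $A^{*}$ is closed under right composition with elements of $R$; the non-admissibility argument itself is then a one-line dimension count.
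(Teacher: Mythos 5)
Your proof is correct. The topology you build is in substance the same one the paper uses: your basic neighborhoods $I_S=\{f: f(A)\subset\bigcap_{\psi\in S}\ker\psi\}$ are exactly the sets of endomorphisms whose image lies in a fixed finite\,-\,codimension subspace, i.e.\ the right annihilators of finite\,-\,rank idempotents, and the paper's topology (fundamental system $Ann_r(K)$ for finite $K\subset R$) reduces to precisely this family via the von Neumann regularity of $R$. Where you genuinely diverge is in the two verifications. First, you check the ring\,-\,topology axioms directly, handling continuity of left multiplication by pulling back functionals ($S=\{\psi\circ x\}$), whereas the paper leans on regularity to replace a finite set $K$ by a single idempotent. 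Second, and more significantly, your non\,-\,admissibility argument is an explicit rank\,-\,one construction: since $\bigcap_{\psi\in S}\ker\psi$ has finite codimension in an infinite\,-\,dimensional space, you exhibit a concrete $f\in I_S$ with $f(a)\neq 0$, so no $I_S$ fits inside $T(\{a\})$. The paper instead argues abstractly: admissibility would force an inclusion between a left and a right annihilator of nonzero idempotents, contradicting the primeness of $EndA$. Your route is more elementary and self\,-\,contained (it uses only linear algebra over $\mathbb F_p$ and needs neither regularity nor primeness of $R$), at the cost of being tied to the vector\,-\,space structure of an elementary $p$-group; the paper's argument is shorter once the ring\,-\,theoretic facts about $EndA$ are granted and suggests how the phenomenon generalizes to other regular prime endomorphism rings.
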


\begin{proof} [Proof of Theorem \ref{T:5}]
Let $\mathcal{T}$ be the ring topology having a fundamental system of neighborhoods of zero $Ann_r(K)$,
where $K$ is a finite subset of $R$. Let us show  that $\mathcal{T}$ is not admissible. Note that the
family $\{Ann_r(e)\}$  is a fundamental system of neighborhoods of zero of $(R,\mathcal{T})$,
where $e$ runs over all idempotents of $I_\omega$, where
\[
I_\omega=\{f\in EndA\; \mid \; |im(f)|<\infty \}.
\]
Indeed, for any finite subset $K\subset R$,  $Kx=0$ if and only if  $RKx=0$.
Since $R$ is regular, $RK=Re$ for an idempotent $e\in R$. Obviously, $e \in K$.

Now we will present a suitable fundamental system of neighborhoods of zero of $(R,\mathcal{T}_{fin})$. Consider
a basis $\{v_n\vert n\in\omega\}$ over $\mathbb F_p$. If $e\in I_\omega$, then $Ann_l(e)\in \mathcal{T}_{fin}$.

Indeed, $T(eA)=Ann_l(e)$. If $K$ is a finite subset of $A$, then
\[
T(K)=T(\langle K\rangle)=Ann_l(e),
\]
where $e$ is an idempotent of $R$ such that
$e\upharpoonright_{\langle K\rangle} =id_{ \langle K\rangle\ }$ and $e(K^\prime)=0$ for some complement $K^\prime$ of $K$.

We claim that $\mathcal{T}$ is not admissible. Assume the contrary. Let $0\neq e\in I_\omega$.
Then there exists an idempotent $f\in I_\omega$ such that
\[
(1-f)R=Ann_l(f)\subset Ann_r(e).
\]
Thus $(1-e)Rf=0$. Since $R$ is a prime ring, either $1-f=0$ or $e=0$, a contradiction.\end{proof}

\section{Topologies  on endomorphism rings of abelian groups associated with functorial topologies}

Let $\mathcal{A}$ be the class of all abelian groups. Assume that, for every group $A \in\mathcal{A},$ there is  a
topology $t(A)$  under which $A$ is a topological group (not necessarily Hausdorff). We call $t = \{ t( A )\vert A \in \mathcal{A}\}$
a functorial topology (see \cite{Fuchs}, p.33) if every homomorphism in $\mathcal A$ is continuous. An important example of a functorial
topology is the Bohr topology on abelian groups. Recall that if $A$ is an abelian group, then the Bohr topology $\mathcal T_{Bohr}$ is
the largest precompact topology on $A$. It is well known that $(A,\mathcal T_{Bohr})$ is Hausdorff. For further information about Bohr
topologies on abelian groups see  \cite{van_Douwen}.

Recall that a ring topology $\mathcal T$ on a ring $R$ is called right bounded if $(R,\mathcal T)$ has a fundamental system of
neighborhoods of zero consisting of right ideals of the multiplicative semigroup of $R$. The definition of a right bounded
topological ring given in \cite{Kaplansky_1} is  somewhat different, but it is equivalent to our notation.

\begin{theorem}\label{T:6}
Let $(A,\mathcal{T})$ be a functorial topology on a group $A$ and $\mathfrak B$ the set of all neighborhoods of zero of $(A,\mathcal{T})$.
For  each $V\in\mathfrak B$  the set $\{\alpha\in EndA\vert \alpha(A)\subset V\}$ is denoted by $P(V)$.

Then the family $\{P(V)\}_{V\in\mathfrak B}$ defines a Hausdorff right bounded ring topology on $EndA$.
\end{theorem}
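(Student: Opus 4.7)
\medskip

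\noindent\textbf{Proof plan.} The plan is to verify the standard axioms for a fundamental system of neighborhoods of zero of a ring topology, and then the extra properties (right-boundedness and Hausdorffness), using the single non-trivial hypothesis that $\mathcal{T}$ is functorial, i.e.\ that every endomorphism of $A$ is $\mathcal{T}$-continuous.

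First I would record the elementary identity
\[
P(V_1)\cap P(V_2)=P(V_1\cap V_2),
\]
which together with the closure of $\mathfrak B$ under finite intersections shows that $\{P(V)\}_{V\in\mathfrak B}$ is a filter base. For the additive group axioms, given $V\in\mathfrak B$ I would choose $W\in\mathfrak B$ with $W+W\subseteq V$ and $-W\subseteq V$ (both available because $(A,\mathcal T)$ is a topological group); evaluation at arbitrary $a\in A$ then yields $P(W)+P(W)\subseteq P(V)$ and $-P(W)\subseteq P(V)$.

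The decisive observation for the multiplicative side is that whenever $\alpha\in P(V)$ and $\beta\in EndA$,
\[
(\alpha\beta)(A)=\alpha(\beta(A))\subseteq\alpha(A)\subseteq V,
\]
so $P(V)\cdot EndA\subseteq P(V)$. This simultaneously supplies the right-boundedness, the continuity of right multiplication by any fixed $\beta_0\in EndA$, and the continuity of the product at the origin (in both cases the choice $W=V$ works). The only multiplicative axiom requiring genuine input is continuity of left multiplication by a fixed $\alpha_0\in EndA$, and this is where I would invoke functoriality: since $\alpha_0\colon A\to A$ is $\mathcal T$-continuous, for each $V\in\mathfrak B$ there is $W\in\mathfrak B$ with $\alpha_0(W)\subseteq V$, whence any $\beta\in P(W)$ satisfies $(\alpha_0\beta)(A)\subseteq\alpha_0(W)\subseteq V$, i.e.\ $\alpha_0 P(W)\subseteq P(V)$.

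For Hausdorffness I would use that $\bigcap_{V\in\mathfrak B}V=\{0\}$ (the functorial topologies of interest, such as the Bohr topology, are Hausdorff, as noted just before the statement). Then $\alpha\in\bigcap_V P(V)$ forces $\alpha(A)\subseteq\{0\}$, so $\alpha=0$. The whole argument is largely a routine check, and I do not anticipate a real obstacle: the construction is rigged so that right-boundedness absorbs most of the work, and the functoriality hypothesis is exactly what is needed to handle continuity of left multiplication, which is the sole non-trivial point.
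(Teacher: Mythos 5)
Your proposal is correct and follows essentially the same route as the paper: verify that $\{P(V)\}_{V\in\mathfrak B}$ is a filter base, get the additive axiom from $W+W\subseteq V$, get boundedness and the multiplicative axioms from $P(V)\cdot EndA\subseteq P(V)$, and get Hausdorffness from $\bigcap_{V\in\mathfrak B}V=0$. In fact you are slightly more careful than the paper, which never explicitly checks continuity of left multiplication by a fixed $\alpha_0$ (the one place where functoriality, i.e.\ $\mathcal T$-continuity of $\alpha_0$, is genuinely needed), and you rightly flag that the Hausdorff conclusion requires $\bigcap_{V\in\mathfrak B}V=\{0\}$, which the paper also uses implicitly.
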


\begin{proof}[Proof of Theorem \ref{T:6}]
Each set $P(V)$ is a right ideal of the multiplicative semigroup of $EndA$. Indeed, $\alpha\beta(A)\subset\alpha(A)\subset V$
for each $\alpha\in P(V)$ and $\beta\in EndA$.

Furthermore, if $V_1,V_2\in \mathfrak B$ and $V_1\subset V_2$, then $P(V_1)\subset P(V_2)$. Therefore, if $V_1,V_2,V_3\in \mathfrak B$
and $V_3\subset V_1\cap V_2$, then $P(V_3)\subset P(V_1)\cap P(V_2)$.

If $V_1,V_2\in \mathfrak B$ and $V_2-V_2\subset  V_1$, then $P(V_2)-P(V_2)\subset P(V_1)$. If $\alpha\in\cap_{V\mathfrak B}P(V)$, then
$\alpha(A)\subset\cap _{V\in\mathfrak B}V=0$, hence $\alpha=0$.

It follows that  the family $\{P(V)\}_{V\in\mathfrak B}$ defines a Hausdorff right boun\-ded ring topology on $EndA$.\end{proof}

\begin{theorem}\label{T:7}
Let $A$ be an elementary countable $p$-group and  let $\mathcal{T}_{Bohr}$ be the Bohr topology on $A$. If $\mathcal U$ is the ring
topology on $EndA$ associated with $\mathcal{T}_{Bohr}$,  then their supremum $\mathcal{T}_{Bohr}\vee\mathcal{T}_{fin}$ is a
nonmetrizable admissible topology.
\end{theorem}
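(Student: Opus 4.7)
The plan has two halves. The admissibility half is immediate: by definition $\mathcal{U}\vee\mathcal{T}_{fin}\supseteq\mathcal{T}_{fin}$, so the supremum topology is admissible. The non-metrizability half is where the work lies, and I will attack it via the Birkhoff--Kakutani criterion: a Hausdorff group topology is metrizable iff it is first countable at $0$. So I will assume a countable base $\{W_n\}_{n\in\omega}$ at $0$ and derive a contradiction by producing an $\mathcal{U}$-neighborhood of $0$ that contains no $W_n$.

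The setup: identify $A$ with $\bigoplus_{i\in\omega}\mathbb{F}_p e_i$. Since $A$ is an elementary $p$-group, every character $\chi:A\to\mathbb{T}$ takes values in the subgroup of $p$-th roots of unity, so we may view $\widehat{A}=\mathrm{Hom}(A,\mathbb{F}_p)\cong\mathbb{F}_p^{\omega}$; in particular $|\widehat{A}|=2^{\aleph_0}$. A basic Bohr neighborhood of $0$ in $A$ is a finite intersection $V=\bigcap_{i=1}^k\ker\chi_i$, i.e.\ a subgroup of finite $p$-power index. Shrinking if necessary, I may assume each $W_n\supseteq P(V_n)\cap T(F_n)$ where $V_n=\bigcap_{\chi\in S_n}\ker\chi$ for some finite $S_n\subset\widehat{A}$ and $F_n\subset A$ is finite. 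Now the key counting step: for each $n$ the set $\{\chi\in\widehat{A}:V_n\subseteq\ker\chi\}$ coincides with the finite dual of $A/V_n$, hence is finite; so $\bigcup_n\{\chi:V_n\subseteq\ker\chi\}$ is countable. Since $\widehat{A}$ is uncountable, there exists $\chi^{*}\in\widehat{A}$ with $V_n\not\subseteq\ker\chi^{*}$ for every $n$.

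The final step is to exhibit $\alpha_n\in W_n\setminus P(\ker\chi^{*})$. For each $n$ pick $v_n\in V_n$ with $\chi^{*}(v_n)\neq 0$, and pick an index $j_n\in\omega$ so that $e_{j_n}$ lies outside the (finite-dimensional) $\mathbb{F}_p$-span of $F_n$. Define $\alpha_n\in\mathrm{End}\,A$ on the basis by $\alpha_n(e_{j_n})=v_n$ and $\alpha_n(e_k)=0$ for $k\neq j_n$. Then $\alpha_n(A)=\langle v_n\rangle\subseteq V_n$ since $V_n$ is a subgroup, so $\alpha_n\in P(V_n)$; $\alpha_n(F_n)=0$, so $\alpha_n\in T(F_n)$; hence $\alpha_n\in P(V_n)\cap T(F_n)\subseteq W_n$. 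On the other hand $\chi^{*}(\alpha_n(e_{j_n}))=\chi^{*}(v_n)\neq 0$, so $\alpha_n\notin P(\ker\chi^{*})$. Since $P(\ker\chi^{*})$ is a $0$-neighborhood in $\mathcal{U}$, hence in $\mathcal{U}\vee\mathcal{T}_{fin}$, it contains no $W_n$, contradicting that $\{W_n\}$ is a base at $0$.

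The main obstacle I expect is the duality bookkeeping in the middle paragraph: one must verify carefully that the basic Bohr neighborhoods are actually intersections of kernels (using that characters of an elementary $p$-group have finite image, so small target neighborhoods pull back to kernels), and that $\{\chi:V_n\subseteq\ker\chi\}$ has cardinality $\leq p^{|S_n|}$. The rank-one construction of $\alpha_n$ and the cardinality comparison $|\widehat{A}|>\aleph_0$ are then straightforward, but the whole argument hinges on having enough "free room" outside $F_n$ to place the witness endomorphism, which is why working with the basis $\{e_i\}$ and picking $j_n$ outside a finite index set is convenient.
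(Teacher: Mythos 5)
Your proof is correct in substance and hits both targets the same way the paper frames them: admissibility is immediate from the definition ($\mathcal U\vee\mathcal T_{fin}\geq\mathcal T_{fin}$), and non-metrizability is refuted by exhibiting, for any putative countable base, a neighborhood $P(\cdot)$ from $\mathcal U$ containing no member of it, witnessed by endomorphisms landing in $P(V_n)\cap T(F_n)$. Where you genuinely differ from the paper is in how the ``bad'' Bohr neighborhood is produced. The paper argues diagonally: from a countable base $\{P(V_i)\cap T(K_i)\}$ it decomposes $A=V_i'\oplus(V_i\cap K_i)\oplus K_i'\oplus W_i$, chooses inductively linearly independent elements $a_i\in V_i'$, builds a map $\lambda$ with $\lambda(a_k)=a_0$ for all $k$, takes the index-$p$ (hence Bohr-open) subgroup $W=\ker\lambda$, and uses as witness the idempotent $\gamma$ that is the identity on $V_i'$ and zero on the complement. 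You instead obtain the bad character by counting: each $V_n$ has finite index, so only finitely many characters annihilate it, while $|\widehat A|=2^{\aleph_0}$, whence some $\chi^*$ vanishes on no $V_n$; your rank-one maps $\alpha_n$ then play the role of the paper's projections $\gamma$. Your route trades the paper's inductive construction and decomposition bookkeeping for the (easy) duality computations $\widehat A\cong\mathbb F_p^{\omega}$ and $\{\chi: V_n\subseteq\ker\chi\}\cong\widehat{A/V_n}$ finite; both are legitimate, and yours is arguably cleaner to verify.

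One local correction: choosing $e_{j_n}$ outside the $\mathbb F_p$-span of $F_n$ does \emph{not} guarantee $\alpha_n(F_n)=0$ (take $F_n=\{e_1+e_2\}$ and $j_n=1$: then $e_1\notin\langle F_n\rangle$ but $\alpha_n(e_1+e_2)=v_n\neq0$). The condition you need is that $j_n$ lies outside the union of the supports of the elements of $F_n$, i.e.\ that the coordinate functional $e_{j_n}^{*}$ vanishes on $\langle F_n\rangle$; since that union is finite, such a $j_n$ exists, and your closing remark about ``picking $j_n$ outside a finite index set'' shows you intended exactly this, so the slip is harmless and the proof stands after this rewording.
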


\begin{proof}[Proof of Theorem \ref{T:7}]
The family $T(K)$,  when $K$ runs over all finite  subgroups of $A$, forms a fundamental system
of neighborhoods of zero of $(EndA,\mathcal{T}_{fin})$. Indeed, $T(K)=T(\langle K\rangle)$ and
$\langle K\rangle$ is a finite subgroup of $A$ for every finite subset $K$ of $A$.

Note that $\mathcal{T}_{Bohr}$ is not admissible. Indeed, let $A=\langle a\rangle\oplus B$ and
let  $P(V)\subset T(a)$ for some cofinite subgroup $V$ of $A$, where $a\not=0$. There exists
$\alpha\in End(A)$ such that $\alpha (A)\subset V$ and $\alpha(a)\neq 0$, hence $\alpha\in P(V\setminus T(a))$, a contradiction.

Let us  prove that  $(A,\mathcal{T}_{Bohr}\vee\mathcal{T}_{fin})$ is nonmetrizable. Assume on the contrary that there exists
a fundamental system $\{P(V_i)\cap T(K_i)\}_{i\in\omega}$ of neighborhoods of zero,  where each $V_i$ is a cofinite subgroup
of $A$ and each $K_i$ is a finite subgroup of $A$. Consider for each $i\in\omega$ the decomposition
$A=V_i^\prime\oplus (V_i\cap K_i)\oplus K_i^\prime\oplus W_i$, where $V_i^\prime\oplus(V_i\cap K_i)=V_i$ and $K_i=(V_i\cap K_i)\oplus K_i^\prime$.
We note that each $V_i^\prime$ is infinite.

Construct by induction the sequence $\{a_i\in V_i^\prime \mid i\in \omega\}$  of  linearly independent system of vectors over $\mathbb F_p$.
Let $\lambda:A\to \langle a_0\rangle$, where  $a_k\mapsto a_0$ for all $k\in\omega$. Then  $W$ is an open subgroup of $(A,\mathcal{T})$.
By assumption there exists $i\in\omega$ such that $P(V_i)\cap T(K_i)\subset P(W)$. Let $\gamma\in P(V_i)\cap T(K_i)$,\;
$\gamma_{\upharpoonright_{V_i^\prime}}=id_{V_i^\prime}$ and
\[
\gamma[(V_i\cap K_i)\oplus K_i^\prime\oplus W_i]=0.
\]
Thus $\gamma\in T(K_i)$ and  $\gamma(A)\subset V_i^\prime\subset V_i$, hence $\gamma\in P(V_i)\cap T(K_i)$. It follows that $\gamma\in P(W)$,
hence $\gamma(a_i)=a_i\in W$ and so $0=\lambda(a_0)=a_0$, a contradiction.
\end{proof}

\bigskip

Now we can ask the following natural question.

\begin{question}
What conditions on the abelian group $A$ ensure  that the ring topology $\mathcal U$ on $EndA$ discrete or  metrizable?

\end{question}

A particular answer to this question  is given by the following result.

\begin{theorem}\label{T:8}
Let   $A$ be one of the following groups:
\begin{itemize}
\item[(i)] a torsion-free group of cardinality $\leq 2^{\aleph_0}$;
\item[(ii)] a free group of cardinality $> 2^{\aleph_0}$.
\end{itemize}
Then $(EndA,\mathcal U)$  is discrete in the case $(i)$ and  non-discrete in the case $(ii)$.
\end{theorem}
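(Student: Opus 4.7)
My plan is to unpack $\mathcal U$ through Theorem \ref{T:6}: the sets $P(V)=\{\alpha\in EndA:\alpha(A)\subseteq V\}$ form a fundamental system of zero-neighborhoods as $V$ ranges over Bohr-neighborhoods of $0$ in $A$. Consequently $(EndA,\mathcal U)$ is discrete iff some $P(V)=\{0\}$, and non-discrete iff every $P(V)$ contains a non-zero endomorphism. Recall also that a basic Bohr-neighborhood of $0$ has the form $\bigcap_{j=1}^n\chi_j^{-1}(U_j)$, and hence contains $\ker(\chi_1,\dots,\chi_n)$.

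For (i), the goal is to exhibit one injective character $\chi:A\to\mathbb{T}=\mathbb{R}/\mathbb{Z}$ and exploit the fact that $\mathbb{T}$ has no small subgroups. Torsion-freeness gives $A\hookrightarrow A\otimes\mathbb{Q}$, a $\mathbb{Q}$-vector space of dimension $\leq 2^{\aleph_0}$; since $\dim_{\mathbb{Q}}\mathbb{R}=2^{\aleph_0}$, I would fix a $\mathbb{Q}$-basis of $\mathbb{R}$ containing $1$ and $\mathbb{Q}$-linearly embed $A\otimes\mathbb{Q}$ into the $\mathbb{Q}$-span of the remaining basis vectors. The resulting embedding $\phi:A\hookrightarrow\mathbb{R}$ has image meeting $\mathbb{Z}$ only at $0$, so composition with $\mathbb{R}\to\mathbb{T}$ yields an injective character $\chi$. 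Now choose an open neighborhood $U$ of $0$ in $\mathbb{T}$ containing no non-trivial subgroup (for instance $U=\{t+\mathbb{Z}:|t|<1/4\}$: every finite cyclic subgroup $\tfrac{1}{n}\mathbb{Z}/\mathbb{Z}$ with $n\geq 2$ has an element at distance $\geq 1/3$ from $0$, and dense subgroups manifestly leave $U$). Set $V=\chi^{-1}(U)$. For any $\alpha\in P(V)$, the image $\chi\alpha(A)$ is a subgroup of $\mathbb{T}$ sitting inside $U$, hence trivial; injectivity of $\chi$ then forces $\alpha=0$, so $P(V)=\{0\}$ and $\mathcal U$ is discrete.

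For (ii), fix a free basis $\{e_i\}_{i\in\kappa}$ of $A$ with $\kappa>2^{\aleph_0}$. Given an arbitrary Bohr-neighborhood $V$ of $0$, select characters $\chi_1,\dots,\chi_n$ with $\ker\Phi\subseteq V$, where $\Phi=(\chi_1,\dots,\chi_n):A\to\mathbb{T}^n$. Since $|\mathbb{T}^n|=2^{\aleph_0}<\kappa$, the pigeonhole principle yields distinct indices $i,j$ with $\Phi(e_i)=\Phi(e_j)$, so $e_i-e_j\in\ker\Phi$. Define $\alpha\in EndA$ by $\alpha(e_i)=e_i-e_j$ and $\alpha(e_k)=0$ for $k\neq i$. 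Then $\alpha\neq 0$ but $\alpha(A)=\mathbb{Z}(e_i-e_j)\subseteq\ker\Phi\subseteq V$, so $\alpha\in P(V)\setminus\{0\}$, whence $(EndA,\mathcal U)$ is non-discrete.

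The principal obstacle is the construction of the injective character in case (i). A naive argument that picks a generic scalar $c\in\mathbb{R}$ so that $c\phi(A)\cap\mathbb{Z}=\{0\}$ may fail at the borderline $|A|=2^{\aleph_0}$, since a union of $2^{\aleph_0}$ null sets can in principle cover $\mathbb{R}$; that is why I work via a prescribed $\mathbb{Q}$-basis of $\mathbb{R}$ instead. Once the injective character is in hand, the no-small-subgroup property of $\mathbb{T}$ finishes (i), and the cardinality gap in (ii) makes the pigeonhole step immediate.
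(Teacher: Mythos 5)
Your proposal is correct. In case (i) you follow essentially the same route as the paper: the paper simply embeds $A$ (torsion-free of cardinality $\leq 2^{\aleph_0}$) into $\mathbb{R}/\mathbb{Z}$ and uses the fact that $\mathbb{R}/\mathbb{Z}$ has a neighborhood of zero containing no non-zero subgroup, so that $P(V)=0$ for the trace $V$ of that neighborhood; your Hamel-basis construction of an injective character is just a more explicit way of producing that embedding, and your care at the borderline $|A|=2^{\aleph_0}$ is a reasonable (if optional) refinement of what the paper asserts without proof. In case (ii) your argument genuinely differs. The paper proves that every neighborhood of zero of $(A,\mathcal{T}_{Bohr})$ contains a non-zero subgroup by contradiction: it passes to the completion $\widehat{A}$, builds a compact subgroup $K=\bigcap\overline{V_n}$ with $\widehat{A}/K$ metrizable, notes $K\cap A=0$, and gets a cardinality contradiction since a compact metrizable group has cardinality at most $2^{\aleph_0}$ while $|A|>2^{\aleph_0}$ (the construction of the non-zero $\alpha\in P(V)$ from a basis element, which you spell out, is left implicit there). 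You instead use the identification of the Bohr topology with the initial topology from all characters, so that every Bohr neighborhood contains $\ker(\chi_1,\dots,\chi_n)$, and a pigeonhole over $\mathbb{T}^n$ (of cardinality $2^{\aleph_0}<|A|$) produces $e_i-e_j$ in that kernel, hence a cyclic subgroup inside $V$ and the explicit non-zero $\alpha\in P(V)$. Your route is more elementary and constructive (no completions or metrizability), at the cost of invoking the standard Comfort--Ross/van Douwen description of the maximal precompact topology via $\mathrm{Hom}(A,\mathbb{T})$, which the paper's definition (``largest precompact topology'') does not hand you directly and which deserves an explicit citation; the paper's argument avoids that description but uses heavier machinery from topological groups, and its subgroup-existence claim holds for any group of cardinality $>2^{\aleph_0}$, freeness entering only in the final endomorphism step.
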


\begin{proof}[Proof of Theorem \ref{T:8}]
(i) Embed $A$ in the compact group $\mathbb R/\mathbb Z$.   Since every infinite subgroup of  $\mathbb R/\mathbb Z$ is dense, $A$ is  a dense subgroup of  $\mathbb R/\mathbb Z$.
Moreover   $\mathbb R/\mathbb Z$  has a neighborhood of zero without non-zero subgroups, so the group  $A$ contains a neighborhood $V$ of zero which does not contain a non-zero
subgroup. Then $P(V)=0$, hence $(EndA,\mathcal U)$ is discrete.

(ii) We claim that every neighborhood of zero of $(A,\mathcal{T}_{Bohr})$ contains a non-zero subgroup.  Assume the contrary. Let $V$ be
a neighborhood of zero of   $(A,\mathcal{T}_{Bohr})$  which does not contain any non-zero subgroup.   Set $V_0=V$ and construct by induction
closed symmetric neighborhoods of zero of   $(A,\mathcal{T}_{Bohr})$ such that $V_n+V_n\subset V_{n-1}$ for  $n\in\omega$.  The closure $\overline{V_n}$
of each $V_n$ in $\widehat{A}$, where $\widehat{A} $ is the completion of  $(A,\mathcal{T}_{Bohr})$, is a neighborhood of zero of $\widehat{A}$.
Then $K=\cap\overline{ V_n}$ is  a compact subgroup of   $\widehat{A}$  and  $\widehat{A}/K $ is metrizable.

Let $\phi: \widehat{A}\to  \widehat{A}/K$ be the canonical homomorphism. We have that $K\cap A\subset \overline{V}\cap A=V$, hence $K\cap A=0$.
It follows that $\widehat {A}/K$ contains an isomorphic copy of $A$, hence $\vert \widehat {A}/K\vert>2^{\aleph_0}$, a contradiction.
\end{proof}

\smallskip

The authors would like to express their  gratitude to Dr. Pace Nielsen  and
particularly to the referee, for valuable remarks.


\begin{thebibliography}{10}

\bibitem{Abel}
M.~Abel and M.~Abel.
\newblock On a problem of {B}ertram {Y}ood.
\newblock {\em Topol. Algebra Appl.}, 2:1--14, 2014.

\bibitem{Abrudan}
H.~F. Abrudan.
\newblock Admissible topologies on endomorphism rings.
\newblock {\em Carpathian J. Math.}, 27(2):149--151, 2011.

\bibitem{Arens_Dugundji}
R.~Arens and J.~Dugundji.
\newblock Topologies for function spaces.
\newblock {\em Pacific J. Math.}, 1:5--31, 1951.

\bibitem{Bourbaki}
N.~Bourbaki.
\newblock {\em Obshchaya topologiya: {T}opologicheskie gruppy. {C}hisla i
  svyazannye s nimi gruppy i prostranstva}.
\newblock Translated from the third French edition by S. N. Kra\v ckovski\u\i.
  Edited by D. A. Ra\u\i kov. Izdat. ``Nauka'', Moscow, 1969.

\bibitem{Budanov}
A.~V. Budanov.
\newblock On the {J}acobson radical of the endomorphism ring of a homogeneous
  separable group.
\newblock {\em Mat. Zametki}, 87(1):133--136, 2010.

\bibitem{Faith_Utumi}
C.~Faith and Y.~Utumi.
\newblock Quasi-injective modules and their endomorphism rings.
\newblock {\em Arch. Math. (Basel)}, 15:166--174, 1964.

\bibitem{Fuchs}
L.~Fuchs.
\newblock {\em Infinite abelian groups. {V}ol. {I, II}}.
\newblock Pure and Applied Mathematics, Vol. 36. Academic Press, New
  York-London, 1970.

\bibitem{Kaplansky_1}
I.~Kaplansky.
\newblock Topological rings.
\newblock {\em Amer. J. Math.}, 69:153--183, 1947.

\bibitem{Kaplansky_2}
I.~Kaplansky.
\newblock Locally compact rings. {II}.
\newblock {\em Amer. J. Math.}, 73:20--24, 1951.

\bibitem{Kaplansky_3}
I.~Kaplansky.
\newblock {\em Selected papers and other writings}.
\newblock Springer-Verlag, New York, 1995.
\newblock With an introduction by Hyman Bass.

\bibitem{Lambek}
J.~Lambek.
\newblock {\em Lectures on rings and modules}.
\newblock With an appendix by Ian G. Connell. Blaisdell Publishing Co. Ginn and
  Co., Waltham, Mass.-Toronto, Ont.-London, 1966.

\bibitem{Mohamed_Muller}
S.~H. Mohamed and B.~J. M{\"u}ller.
\newblock {$\aleph$}-exchange rings.
\newblock In {\em Abelian groups, module theory, and topology ({P}adua, 1997)},
  volume 201 of {\em Lecture Notes in Pure and Appl. Math.}, pages 311--317.
  Dekker, New York, 1998.

\bibitem{Pierce}
R.~S. Pierce.
\newblock Endomorphism rings of primary {A}belian groups.
\newblock In {\em Proc. {C}olloq. {A}belian {G}roups ({T}ihany, 1963)}, pages
  125--137. Akad\'emiai Kiad\'o, Budapest, 1964.

\bibitem{Ursul_book}
M.~Ursul.
\newblock {\em Topological rings satisfying compactness conditions}, volume 549
  of {\em Mathematics and its Applications}.
\newblock Kluwer Academic Publishers, Dordrecht, 2002.

\bibitem{Ursul_Juras}
M.~Ursul and M.~Jur{\'a}{\v{s}}.
\newblock Topological rings of endomorphisms.
\newblock {\em Comm. Algebra}, 38(4):1421--1435, 2010.

\bibitem{van_Douwen}
E.~K. van Douwen.
\newblock The maximal totally bounded group topology on {$G$} and the biggest
  minimal {$G$}-space, for abelian groups {$G$}.
\newblock {\em Topology Appl.}, 34(1):69--91, 1990.

\bibitem{Yood_1}
B.~Yood.
\newblock Ideals in topological rings.
\newblock {\em Canad. J. Math.}, 16:28--45, 1964.

\bibitem{Yood_2}
B.~Yood.
\newblock Incomplete normed algebras.
\newblock {\em Bull. Amer. Math. Soc.}, 78:50--52, 1972.

\end{thebibliography}
\end{document}